\newtheorem{theorem}{Theorem}[section]                    
\newtheorem{lemma}[theorem]{Lemma}                    
\newtheorem{proposition}[theorem]{Proposition}  
\theoremstyle{definition}
\newtheorem{example}[theorem]{Example}
\theoremstyle{remark}
\newtheorem{remark}[theorem]{Remark}
\numberwithin{equation}{section}
\begin{document}

\title[B\"{o}ttcher coordinates]{B\"{o}ttcher coordinates at superattracting fixed points of holomorphic skew products} 

\author[K. Ueno]{Kohei Ueno}
\address{Daido University, Nagoya 457-8530, Japan}
\email{k-ueno@daido-it.ac.jp}

\subjclass[2010]{Primary 32H50}
\keywords{Complex dynamics, B\"{o}ttcher coordinates, skew products}

\date{}

\dedicatory{}

\begin{abstract}
Let $f : (\mathbb{C}^2, 0) \to ( \mathbb{C}^2, 0)$ be a germ of holomorphic skew product
with a superattracting fixed point at the origin.
If it has a suitable weight, then we can construct a B\"{o}ttcher coordinate 
which conjugates $f$ to the associated monomial map.
This B\"{o}ttcher coordinate is defined on an invariant open set
whose interior or boundary contains the origin.
\end{abstract}

\maketitle

\section{Introduction}

Let $p : (\mathbb{C}, 0) \to  \mathbb{C}, 0)$ be 
a holomorphic germ with a superattracting fixed point at the origin.
Taking an affine conjugate,
we may write $p(z) = z^{\delta} + O(z^{\delta + 1})$, where $\delta \geq 2$.
Let $p_0(z) =  z^{\delta}$.
B\"{o}ttcher's theorem \cite{b} asserts that there is
a conformal function $\varphi_p$ defined on a neighborhood of the origin, 
with $\varphi_p \sim id$, that conjugates $p$ to $p_0$.
Here $\varphi_p \sim id$ means that
the ratio of $\varphi_p$ and $id$ converges to $1$ as $z \to 0$.
This function is called the B\"{o}ttcher coordinate for $p$ at the origin,
and obtained as the limit of 
the compositions of $p_0^{-n}$ and $p^n$,
where $p^n$ denotes the $n$-th iterate of $p$.
The branch of $p_0^{-n}$ is taken such that $p_0^{-n} \circ p_0^n = id$.

B\"{o}ttcher's theorem does not extend 
to higher dimensions entirely as stated in \cite{hp}.
For example, let $f(z,w) = (z^2, w^2 + z^4)$.
Then it has a superattracting fixed point at the origin,
but there is no neighborhood of the origin 
on which $f$ is conjugate to $f_0(z,w) = (z^2, w^2)$
because the critical orbits of $f$ and $f_0$ behave differently.
However, we can completely understand the dynamics of $f$
because it is semiconjugate to $g(z,w) = (z^2, w^2 + 1)$
by $\pi(z,w) = (z, z^2 w)$: $\pi \circ g = f \circ \pi$.
In particular,
from the one-dimensional B\"{o}ttcher coordinate for $w \to w^2 + 1$ near infinity,
one can construct a biholomorphic map 
defined on $\{ |z| < r |w|^2 \}$ for small $r$
that conjugates $f$ to $f_0$.
This domain is not a neighborhood of the origin, 
but its boundary contains the origin.
In this paper
we analyze such phenomena for holomorphic skew products
with superattracting fixed points at the origin in $\mathbb{C}^2$.
By assigning suitable weights,
we obtain an analogue of the one-dimensional B\"{o}ttcher coordinates;
see Theorems \ref{main thm: d > 1} and \ref{main thm: d = 1} below.
The idea of this study is the same as that of our previous study \cite{u},
in which we obtained similar results on B\"{o}ttcher coordinates
for polynomial skew products near infinity.
Moreover,
our results are closely related to Theorem 5.1 in \cite{fj},
which is obtained by Theorem C in \cite{fj} and the result in \cite{f}.
Favre and Jonsson \cite{fj} have established 
a systematic way to study the dynamics of 
all holomorphic germs with superattracting fixed points in dimension two;
see also Section 8 in a survey article \cite{j}.
Favre \cite{f} has classified contracting rigid germs in dimension two; 
a germ is called rigid if the union of the critical set of all its iterates is 
a divisor with normal crossing and forward invariant.
See also \cite{r-rigid} and \cite{r}.

For other studies on B\"{o}ttcher's theorem in higher dimensions,
we refer to \cite{ushiki}, \cite{ueda} and \cite{bek};
they dealt with holomorphic germs 
with superattracting fixed points at the origin 
in dimension two or more.
Ushiki \cite{ushiki} and Ueda \cite{ueda} gave
different classes of germs that have 
the B\"{o}ttcher coordinates on neighborhoods of the origin.
Buff, Epstein and Koch \cite{bek} gave 
criteria, in terms of vector fields, 
for a certain class of germs 
to have the B\"{o}ttcher coordinates on neighborhoods of the origin.
The germs in \cite{ushiki} are rigid and conjugate to monomial maps,
whereas the germs in \cite{ueda} or \cite{bek} are conjugate to homogeneous or quasihomogeneous maps.
In addition, we refer to a survey article \cite{a}.
Besides theorems for the superattracting case,
Abate \cite{a} collected major theorems on local dynamics
of holomorphic germs with fixed points of several types 
in one and higher dimensions.

Let $f : ( \mathbb{C}^2, 0) \to (\mathbb{C}^2, 0)$ be 
a holomorphic germ of the form $f(z,w)=(p(z)$, $q(z,w))$,
which is called a holomorphic skew product in this paper.
We assume that it has a superattracting fixed point at the origin;
that is, $f(0) = 0$ and $Df(0)$ is the zero matrix.
Then we may write $p(z) = z^{\delta} + O(z^{\delta + 1})$,
where $\delta \geq 2$.
On the other hand, let
\[ 
q(z,w) = b z^{\gamma} w^d + \sum_{} b_j z^{n_j} w^{m_j},
\]
where $b \neq 0$, $n_j \geq \gamma$, and $m_j > d$ if $n_j = \gamma$.
In other words,
$(\gamma, d)$ is the minimal exponent with respect to the lexicographic order 
that appears in the power series expansion of $q$.
Since the origin is superattracting,
$\gamma + d \geq 2$ and $n_j + m_j \geq 2$.
If $d \geq 2$, then we may assume that $b = 1$.
In this paper
we say that $f$ is \textit{trivial} if $m_j \geq d$ for any $j$.
For this case, we prove that
the B\"{o}ttcher coordinate for $f$ exists on a  neighborhood of the origin,
and the proof is rather easy.
As a remark, $f$ is rigid
if it is \textit{trivial} or $d = 1$.
Moreover,
$f$ belongs to Class 6 in \cite{f} 
and the result follows if it is \textit{trivial} and $d \geq 2$,
and $f$ belongs to Class 4 if $d = 1$.
On the other hand,
we say that $f$ is \textit{non-trivial} if $m_j < d$ for some $j$. 
This case is the difficult part,
in which we need the idea of assigning a suitable weight.

We define the rational number $\alpha$ associated with $f$ as
\[
\alpha =
\min 
\left\{ a \geq 0 \ \Big| 
\begin{array}{lcr}
a \gamma + d \leq \delta \text{ and } 
a \gamma + d \leq a n_j + m_j \\  
\text{for any $j$ such that } b_j \neq 0
\end{array} 
\right\}
\]
if $f$ is \textit{non-trivial},
and as $0$ if $f$ is \textit{trivial}.
When there is at least one $a$ satisfying all conditions,
then $\alpha$ is a well-defined non-negative real number.
If there are no such $a$, we say that $\alpha$ is not well-defined.
Let $U_r = U_r^{\alpha} =\{ |z| < r|w|^{\alpha}, |w| < r \}$.
The benefit of $\alpha$ is presented in the following lemma.

\begin{lemma} \label{main lem: d > 1}
Let $d \geq 2$.
If $\alpha$ is well-defined, then
$f(z,w) \sim (z^{\delta}, z^{\gamma} w^d)$ on $U_r^{\alpha}$ as $r \to 0$,
and $f(U_r^{\alpha}) \subset U_r^{\alpha}$ for small $r$. 
\end{lemma}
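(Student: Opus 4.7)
The plan is to verify both assertions directly, leveraging the two inequalities that enter the definition of $\alpha$.

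For the asymptotic $f \sim (z^\delta, z^\gamma w^d)$, on $U_r$ one has $|z| < r|w|^\alpha \leq r^{1+\alpha}$, so $p(z) = z^\delta(1+O(z))$ immediately yields $p(z)/z^\delta \to 1$ uniformly as $r \to 0$. The content lies in the second coordinate. Writing
\[
\frac{q(z,w)}{z^\gamma w^d} - 1 \;=\; \sum_j b_j\, z^{n_j - \gamma}\, w^{m_j - d},
\]
I would estimate each term by substituting $|z/w^\alpha| < r$ to get
\[
|z|^{n_j - \gamma}|w|^{m_j - d} \;\leq\; r^{n_j - \gamma}\, |w|^{\alpha(n_j - \gamma) + (m_j - d)}.
\]
The defining condition $\alpha n_j + m_j \geq \alpha\gamma + d$ makes the exponent of $|w|$ non-negative, so combining with $|w| < r < 1$ yields the clean bound $r^{e_j}$ with $e_j := (1+\alpha)(n_j - \gamma) + (m_j - d)$. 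A short case split (using either $n_j > \gamma$, or $n_j = \gamma$ with $m_j > d$) together with $\alpha(n_j-\gamma)+(m_j-d)\geq 0$ shows $e_j \geq 1$ for every $j$ appearing in the sum.

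The main obstacle is controlling the sum $\sum_j |b_j|\, r^{e_j}$ when $q$ has infinitely many monomials. My approach is to identify
\[
\sum_j |b_j|\, r^{e_j} \;=\; r^{-(1+\alpha)\gamma - d}\sum_j |b_j|\, r^{(1+\alpha) n_j + m_j},
\]
recognising the inner sum as the value of the absolutely convergent series $\sum_j |b_j|\, x^{n_j} y^{m_j}$ at $(x,y) = (r^{1+\alpha}, r)$. Since $e_j \geq 1$ forces every exponent $(1+\alpha)n_j + m_j$ in the inner series to be at least $(1+\alpha)\gamma + d + 1$, factoring out that minimal power of $r$ leaves a residual series $\sum_j |b_j|\, r^{e_j - 1}$ with non-negative exponents, hence monotone increasing in $r$ and locally bounded. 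This yields $\sum_j |b_j|\, r^{e_j} = O(r) \to 0$, and therefore $q(z,w)/(z^\gamma w^d) \to 1$ uniformly on $U_r$ as $r \to 0$.

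For the invariance $f(U_r^\alpha) \subset U_r^\alpha$, the first assertion lets me write $|p(z)| \leq 2|z|^\delta$ and $|z|^\gamma |w|^d / 2 \leq |q(z,w)| \leq 2|z|^\gamma |w|^d$ for $r$ small. The bound $|q(z,w)| < r$ then reduces to $2\, r^{(1+\alpha)\gamma + d} < r$, which holds for small $r$ because $\gamma + d \geq 2$. For $|p(z)| < r\, |q(z,w)|^\alpha$, the other defining inequality $\alpha\gamma + d \leq \delta$ supplies $\delta - \alpha\gamma \geq d \geq 2$; raising $|z| < r|w|^\alpha$ to this positive power gives $|z|^{\delta - \alpha\gamma} \leq r^{\delta - \alpha\gamma}|w|^{\alpha(\delta - \alpha\gamma)}$, and a routine manipulation using $|w| < r$ shows this is much smaller than a constant multiple of $r|w|^{\alpha d}$ for small $r$, completing the inclusion.
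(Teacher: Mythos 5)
Your proof is correct and follows essentially the same route as the paper: bound $|q/(z^\gamma w^d)-1|$ term by term using the defining inequalities $n_j\ge\gamma$ and $\alpha n_j+m_j\ge\alpha\gamma+d$, observe that at least one of the two relevant gaps is $\ge 1$ so each term is $O(r)$, and for invariance use the asymptotics together with $\alpha\gamma+d\le\delta$ (hence $\delta-\alpha\gamma\ge d\ge 2$). The paper packages the same estimate via the substitution $|z|=|cw^\alpha|$, turning the error into a convergent power series in $|c|,|w|$ with no constant term — a cosmetic difference from your direct $r^{e_j}$ bookkeeping, not a different argument.
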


The notation $f \sim f_0$ means that
the ratios of the first and second components of $f$ and $f_0$ 
converge to $1$ on $U_r^{\alpha}$ as $r \to 0$.
Hence 
Lemma \ref{main lem: d > 1} says that the asymptotic behavior 
of $f$ on $U_r^{\alpha}$ when $r \to 0$ coincides with $f_0$,
where $f_0(z,w) = (z^{\delta}, z^{\gamma} w^d)$.
With the next theorem, we get a stronger result,
the existence of a conjugacy between $f$ and $f_0$.

\begin{theorem}\label{main thm: d > 1}
Let $d \geq 2$.
If $\alpha$ is well-defined, then
there is a biholomorphic map $\phi$ defined on $U_r^{\alpha}$,
with $\phi \sim id$ on $U_r^{\alpha}$ as $r \to 0$,
that conjugates $f$ to $(z,w) \to (z^{\delta}, z^{\gamma} w^d)$. 
\end{theorem}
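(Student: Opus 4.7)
The approach is to follow the classical B\"ottcher telescoping, adapted to two variables: first use the one-dimensional theorem in the $z$-direction, then solve a functional equation in $w$ by an infinite product whose convergence is powered by the definition of $\alpha$ and by Lemma \ref{main lem: d > 1}. First I would apply the one-dimensional B\"ottcher theorem to $p$ to obtain $\varphi \sim id$ with $\varphi \circ p = \varphi^{\delta}$; conjugating $f$ by $(z,w) \mapsto (\varphi(z),w)$ then reduces the problem to the case $p(z) = z^{\delta}$, and $U_r^{\alpha}$ is only slightly distorted since $\varphi \sim id$.

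Writing $q(z,w) = z^{\gamma} w^d (1 + R(z,w))$ with $R(z,w) = \sum_{j} b_j z^{n_j - \gamma} w^{m_j - d}$, I would seek $\phi$ of the form $\phi(z,w) = (z,\, w\, h(z,w))$. The conjugacy equation $\phi \circ f = f_0 \circ \phi$ reduces to $h(z,w)^d = (1 + R(z,w))\, h(f(z,w))$, and iterating this relation suggests the closed form
\[
h(z,w) = \prod_{k=0}^{\infty} \bigl( 1 + R(f^k(z,w)) \bigr)^{1/d^{k+1}},
\]
where each factor is taken to be the principal $d^{k+1}$-th root (unambiguous once the factor is close to $1$). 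The whole proof then reduces to uniform convergence of this product on $U_r^{\alpha}$.

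The key quantitative step is the bound $|R| \leq C r^{\eta}$ for some $\eta > 0$ on $U_r^{\alpha}$: for each term with $n_j > \gamma$ and $m_j < d$, the inequality $\alpha(n_j - \gamma) + m_j \geq \alpha \gamma + d$ from the definition of $\alpha$, combined with $|z| < r|w|^{\alpha}$ and $|w| < r$, gives $|z|^{n_j - \gamma} |w|^{m_j - d} \leq r^{n_j - \gamma}$, while terms with $n_j = \gamma$, $m_j > d$ contribute powers of $|w| < r$. The invariance $f(U_r^{\alpha}) \subset U_r^{\alpha}$ from Lemma \ref{main lem: d > 1} then propagates this estimate to every iterate, so $\sum_k |R(f^k)| / d^{k+1} \leq C r^{\eta}/(d-1)$. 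Hence the product converges uniformly to a holomorphic, nonvanishing $h$ on $U_r^{\alpha}$ with $|h - 1| = O(r^{\eta})$.

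Once $h$ is in hand, the functional equation is satisfied by construction of the product, so $\phi$ conjugates $f$ to $f_0$ and $\phi \sim id$ on $U_r^{\alpha}$ as $r \to 0$. For biholomorphicity, the Jacobian of $(z, w h(z,w))$ is $h + w h_w$, which tends to $1$ on $U_r^{\alpha}$ by Cauchy estimates applied to $h - 1$; injectivity of $w \mapsto w h(z,w)$ on each vertical slice follows from $h$ being close to $1$, and injectivity across slices is automatic from the identity first coordinate. The main obstacle, as indicated, is the uniform estimate on $R$: once the definition of $\alpha$ is exploited to absorb the negative exponents $m_j - d$ against the powers of $|w|^{\alpha}$ in the shape of $U_r^{\alpha}$, the classical B\"ottcher telescoping argument takes over.
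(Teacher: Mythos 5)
Your strategy is a genuine reorganization of the paper's argument rather than a reproduction of it, so let me first compare and then flag the gap. The paper works with $\phi_n = f_0^{-n}\circ f^n$ in both coordinates simultaneously, lifts to logarithmic coordinates $(Z,W)=(\log z,\log w)$ where $f_0$ becomes affine, and controls $\|\Phi_{n+1}-\Phi_n\|$ from the single bound $\|F-F_0\|<\varepsilon$. You instead normalize the base first by the one-dimensional B\"ottcher theorem, which is legitimate and reduces $p$ to $z^{\delta}$ without changing $\alpha$ (new terms $b_jz^{n'}w^{m_j}$ created by the substitution have $n'>n_j$, hence smaller $(d-m_j)/(n'-\gamma)$, so $m_f$ is unchanged), and then you solve a scalar functional equation $h^d=(1+R)\,h\circ f$ by the explicit product $h=\prod_{k\ge0}(1+R\circ f^k)^{1/d^{k+1}}$. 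This is more transparent than the paper's two-variable lift and makes the role of $\alpha$ and of the invariance $f(U_r^{\alpha})\subset U_r^{\alpha}$ very clean; your estimate $\sum_k|R(f^k)|/d^{k+1}\lesssim r^{\eta}/(d-1)$ is correct and gives uniform convergence. So far this is a valid alternative derivation of the existence of $\phi$ with $\phi\sim id$.

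The gap is in the injectivity step, and it is precisely the difficulty the paper spends all of Section~4 on. You assert that ``injectivity of $w\mapsto w\,h(z,w)$ on each vertical slice follows from $h$ being close to $1$,'' but when $\alpha>0$ the slice $U_r^{\alpha}\cap\{z=z_0\}$ for $z_0\neq0$ is an \emph{annulus} $\{(|z_0|/r)^{1/\alpha}<|w|<r\}$, not a disk. The usual argument that a derivative close to $1$ forces injectivity uses convexity of the domain (integrate $(wh)'-1$ along the segment joining two preimages), and that segment may leave a thin annulus. Nor does $h$ extend holomorphically across $w=0$: by construction $R$ contains terms $b_jz^{n_j-\gamma}w^{m_j-d}$ with $m_j<d$, so $R$, and hence $h$, has an essential obstruction on $\{w=0\}$, and one cannot argue on the filled disk. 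Finally, the Cauchy estimate you invoke for $h_w$ degenerates as one approaches the inner boundary of the annulus, so it does not yield a uniform bound on $U_r^{\alpha}$ itself. To close the gap you must do what the paper does: pass to logarithmic coordinates, where the slices become convex strips and $h$ lifts to a small perturbation of the identity, shrink $r$ (or the strip) by an amount proportional to $\|\log h\|_{\infty}$, and then apply Rouch\'e's theorem (Propositions~\ref{biholo of lift} and \ref{biholo of phi}); equivalently, in your annulus picture one must first observe that $w_1h(w_1)=w_2h(w_2)$ forces $|w_1/w_2-1|$ small, shrink the annulus so that the segment $[w_1,w_2]$ stays inside, and only then use the derivative bound on the original, larger region. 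Without that shrinking step the claimed injectivity is not justified, and the statement that the map is biholomorphic on $U_r^{\alpha}$ (for a slightly smaller $r$) remains unproved.
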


We call $\phi$ the B\"{o}ttcher coordinate for $f$ in this paper. 
As in the one-dimensional case, 
it is obtained as the limit of the compositions of $f_0^{-n}$ and $f^n$.

For $d = 1$ 
we need the additional condition $\alpha < (\delta - 1)/ \gamma$
to get again B\"{o}ttcher coordinates.

\begin{lemma}\label{main lem: d = 1}
Let $d = 1$.
If $\alpha$ is well-defined and $\alpha < (\delta - 1)/ \gamma$, then
$f(z,w) \sim (z^{\delta}, b z^{\gamma} w)$ on $U_r^{\alpha}$ as $r \to 0$,
and $f(U_r^{\alpha}) \subset U_r^{\alpha}$ for small $r$. 
\end{lemma}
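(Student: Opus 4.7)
The structure mirrors that of Lemma~\ref{main lem: d > 1}; the strict inequality $\alpha<(\delta-1)/\gamma$ enters only in the invariance step. For the asymptotics, since $|z|<r|w|^\alpha\le r^{1+\alpha}\to 0$ on $U_r^\alpha$, the expansion $p(z)=z^\delta+O(z^{\delta+1})$ immediately yields $p(z)/z^\delta\to 1$. For the second component, write
\[
\frac{q(z,w)}{b\,z^\gamma w}-1=\frac{1}{b}\sum_j b_j\,z^{n_j-\gamma}w^{m_j-1},
\]
and seek uniform smallness of this sum on $U_r^\alpha$ as $r\to 0$. For each $j$ set $\lambda_j:=n_j-\gamma$ and $\mu_j:=\alpha(n_j-\gamma)+m_j-1$; both are nonnegative by the defining inequalities of $\alpha$, and never simultaneously zero, since $\lambda_j=0$ forces $n_j=\gamma$ and hence $m_j\ge 2$, giving $\mu_j\ge 1$. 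With $v=|z||w|^{-\alpha}<r$ and $u=|w|<r$, each summand is then bounded by $|b_j|v^{\lambda_j}u^{\mu_j}$.

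To upgrade this termwise control to uniform control of the infinite sum, observe that $\alpha$ is rational: its defining minimum is either $0$ or the value at a binding linear inequality with integer coefficients. Writing $\alpha=k/\ell$ in lowest terms and substituting $\tilde u=u^{1/\ell}$, the series $\sum|b_j|v^{\lambda_j}\tilde u^{\ell\mu_j}$ has nonnegative integer exponents and, up to the common factor $v^\gamma\tilde u^{k\gamma+\ell}$, coincides with the convergent modulus series $\sum|b_j||z|^{n_j}|w|^{m_j}$ under $|z|=v\tilde u^k$, $|w|=\tilde u^\ell$. Hence it converges on a polydisc in $(v,\tilde u)$ and defines a continuous function whose value at the origin is $0$, because every exponent pair has positive sum. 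This delivers the desired uniform smallness.

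For the invariance $f(U_r^\alpha)\subset U_r^\alpha$, the asymptotics give, for small $r$, $|p(z)|\le 2|z|^\delta$ and $\tfrac12|b||z|^\gamma|w|\le|q(z,w)|\le 2|b||z|^\gamma|w|$. The estimate $|q(z,w)|<r$ reduces to $r^{\gamma(1+\alpha)}\lesssim 1$, which holds for small $r$ provided $\gamma\ge 1$; and $\gamma\ge 1$ is forced by the superattracting hypothesis, since $\gamma=0$ would make $q_w(0,0)=b\ne 0$ and hence $Df(0)\ne 0$. The estimate $|p(z)|<r|q(z,w)|^\alpha$, after using $|z|<r|w|^\alpha$, reduces to $r^{\delta-\alpha\gamma-1}|w|^{\alpha(\delta-\alpha\gamma-1)}\lesssim 1$; this is the only step where the hypothesis $\alpha<(\delta-1)/\gamma$ is used, as it precisely yields $\delta-\alpha\gamma-1>0$ and makes the left-hand side vanish as $r\to 0$. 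I expect the main obstacle to be the uniform sum control in the second paragraph, complicated by possible negative powers of $w$ (when some $m_j=0$) and, through the substitution, fractional exponents; the rational-exponent reduction together with comparison to the convergent modulus series for $q$ is the essential trick.
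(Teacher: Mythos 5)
Your proof is correct and follows essentially the same route as the paper: the asymptotics are obtained by substituting $|z|=|c||w|^{\alpha}$ (your $v=|z||w|^{-\alpha}$) into the remainder $\sum b_j z^{n_j-\gamma}w^{m_j-1}$, observing that the resulting exponents in $|c|$ and $|w|$ are nonnegative and not simultaneously zero, and comparing with the convergent series for $q$; and the invariance uses $\delta-\alpha\gamma>1$, which is exactly where $\alpha<(\delta-1)/\gamma$ enters. One small improvement on your side: the paper's proof of the corresponding Lemma~\ref{weights: d > 1} (to which the $d=1$ asymptotics are delegated) asserts that the bounding sum ``is a power series in $|c|$ and $|w|$'' even though the exponents of $|w|$ are in general only rational; your explicit reduction via $\alpha=k/\ell$ and $\tilde u=u^{1/\ell}$ makes this step literally correct and also cleanly disposes of the possible negative powers of $w$ arising from terms with $m_j=0$. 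Your observation that $\gamma\ge 1$ when $d=1$ (forced by $Df(0)=0$) is likewise consistent with the paper's standing assumption $\gamma+d\ge 2$.
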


\begin{theorem}\label{main thm: d = 1}
Let $d = 1$.
If $\alpha$ is well-defined and $\alpha < (\delta - 1)/ \gamma$, then
there is a biholomorphic map $\phi$ defined on $U_r^{\alpha}$,
with $\phi \sim id$ on $U_r^{\alpha}$ as $r \to 0$,
that conjugates $f$ to $(z,w) \to (z^{\delta}, b z^{\gamma} w)$. 
\end{theorem}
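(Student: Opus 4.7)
The plan is to imitate the proof of Theorem \ref{main thm: d > 1}, with the twist that for $d=1$ the map $f_0(z,w)=(z^{\delta}, bz^{\gamma}w)$ is linear in $w$ and no iterated B\"ottcher procedure is needed for the second variable. First I would iterate $f_0$ by induction to obtain
\[
f_0^n(z,w) = \bigl( z^{\delta^n},\, b^n z^{\gamma s_n} w \bigr), \qquad s_n := \tfrac{\delta^n-1}{\delta-1},
\]
so that, with suitable branches, $f_0^{-n}(Z,W) = (Z^{1/\delta^n},\, b^{-n} Z^{-\gamma s_n/\delta^n} W)$. Writing $f^n(z,w)=(p^n(z), Q_n(z,w))$, I would then set
\[
\phi_n(z,w) := f_0^{-n}\circ f^n(z,w) = \Bigl( (p^n(z))^{1/\delta^n},\; b^{-n} (p^n(z))^{-\gamma s_n/\delta^n} Q_n(z,w) \Bigr),
\]
with branches chosen so that $\phi_0=\mathrm{id}$. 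The first coordinate is nothing but the classical one-dimensional B\"ottcher coordinate of $p$, whose convergence and asymptotics are known, so the real work concerns the second coordinate $\phi_n^{(2)}$.

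I would establish convergence of $\phi_n^{(2)}$ by controlling the ratio $\phi_{n+1}^{(2)}/\phi_n^{(2)}$. Using Lemma \ref{main lem: d = 1}, which supplies both forward invariance of $U_r^{\alpha}$ and the asymptotic $f\sim f_0$ there, I would factor
\[
Q_{n+1}(z,w) = q\bigl(p^n(z), Q_n(z,w)\bigr) = b (p^n(z))^{\gamma} Q_n(z,w)\bigl(1+\eta_n(z,w)\bigr),
\]
where each non-leading term contributes a piece $b_j b^{-1} (p^n(z))^{n_j-\gamma} Q_n(z,w)^{m_j-1}$ to $\eta_n$. On $U_r^{\alpha}$, the defining inequalities $\alpha n_j+m_j\geq \alpha\gamma+1$ force each such piece to be dominated by a positive power of $|(p^n(z), Q_n(z,w))|$. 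Combining this factorization with the identity $s_{n+1}=\delta s_n+1$ (so $s_{n+1}-s_n=\delta^n$) and the standard approximation $p^{n+1}(z)=(p^n(z))^{\delta}(1+O(p^n(z)))$, a direct computation gives
\[
\frac{\phi_{n+1}^{(2)}}{\phi_n^{(2)}} = (1+\eta_n)\bigl(1+O(p^n(z))\bigr),
\]
in which the exponent factors cancel precisely by virtue of $s_{n+1}-s_n=\delta^n$. Provided both $\eta_n$ and $p^n(z)$ decay geometrically on $U_r^{\alpha}$, the infinite product $\prod(1+\text{small})$ converges uniformly on compacta, yielding a holomorphic limit $\phi$ that satisfies $\phi\circ f=f_0\circ \phi$ and, since $\phi_0=\mathrm{id}$, also $\phi\sim\mathrm{id}$.

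The main obstacle is precisely this geometric decay of $|p^n(z)|$ on $U_r^{\alpha}$, and it is here that the strict inequality $\alpha<(\delta-1)/\gamma$ plays a decisive role: it guarantees that the $z$-component contracts strictly faster than the rate $\gamma s_n/\delta^n\to \gamma/(\delta-1)$ that appears in the exponent, so that the above estimates genuinely sum. In the boundary case $\alpha=(\delta-1)/\gamma$ the two rates would exactly balance and the product would fail to converge. Once convergence and the functional equation are in hand, biholomorphism of $\phi$ on $U_r^{\alpha}$ follows from $\phi\sim\mathrm{id}$ together with the conjugacy equation by a standard argument, the first component inheriting invertibility from the classical B\"ottcher map $\varphi_p$ and the second being, in each slice $z=z_0$, a holomorphic function of $w$ with leading term $w$.
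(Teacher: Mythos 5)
Your overall strategy is the right one and matches the paper's in spirit: you set $\phi_n = f_0^{-n}\circ f^n$, use the identity $s_{n+1}-s_n=\delta^n$ to make the powers of $p^n(z)$ cancel in the ratio $\phi^{(2)}_{n+1}/\phi^{(2)}_n$, and reduce convergence of the telescoping product to summability of the error terms. (The paper works additively with the exponential lifts $\Phi_n$, you work multiplicatively, but this is cosmetic.) However, there is a genuine gap in the step you treat as routine, and your explanation of where the hypothesis $\alpha<(\delta-1)/\gamma$ enters is mislocated.

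After the cancellation, the ratio you obtain is
\[
\frac{\phi^{(2)}_{n+1}}{\phi^{(2)}_n}
= \bigl(1+\zeta(p^n(z))\bigr)^{-\gamma s_{n+1}/\delta^{n+1}}\,\bigl(1+\eta(f^n(z,w))\bigr).
\]
The exponent $-\gamma s_{n+1}/\delta^{n+1}\to -\gamma/(\delta-1)$ is bounded, and $|\zeta(p^n(z))|=O(|p^n(z)|)$ decays super-exponentially for any $z$ near $0$; neither of these needs the strict inequality, so the ``race'' you describe between $|p^n(z)|$ and the exponent rate is not where the difficulty lies. The term that needs work is $\sum_n |\eta(f^n)|$: the orbit $f^n(z,w)$ must not only remain in $U_r^\alpha$ but must also contract in the coordinate $c=z/w^\alpha$, otherwise $|\eta(f^n)|$ can stay bounded below. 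This is precisely the content of the paper's Lemma~\ref{lem1: d = 1}, which proves $f^n(U_r^\alpha)\subset U_{r/2^n}^\alpha$, and the strict inequality $\alpha<(\delta-1)/\gamma$ is what makes it work: it forces $\delta-\alpha\gamma>1$, so that $|p(z)|/|q(z,w)|^\alpha$ is bounded by $|c|^{\delta-\alpha\gamma}$ times a controlled factor, and thus $|c|$ (and $|w|$) shrink by a definite factor at each step. From this, Lemma~\ref{lem2: d = 1} gives $|\eta(f^n)|=O(r/2^n)$, which is the estimate your proof implicitly assumes. Without proving this contraction you have asserted the conclusion rather than established it; at the boundary $\alpha=(\delta-1)/\gamma$ the exponent $\delta-\alpha\gamma$ equals $1$, the contraction of $|c|$ is lost, and the example $f(z,w)=(z^2,bzw+z^2)$ at the end of Section~5 shows the theorem genuinely fails.

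Finally, your remark that biholomorphism ``follows by a standard argument'' from $\phi\sim\mathrm{id}$ is not automatic here, because $U_r^\alpha$ is not a neighborhood of the origin and $\phi$ need not be injective on every region where $\phi\sim\mathrm{id}$. The paper handles this by applying Rouch\'e's theorem to the lifted map slice by slice (Proposition~\ref{biholo of lift}); some such quantitative argument is required, not just the asymptotic $\phi\sim\mathrm{id}$.
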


Our results also hold for the nilpotent case.
We say that the germ $f : (\mathbb{C}^2, 0) \to (\mathbb{C}^2, 0)$ is nilpotent 
if $f(0)=0$ and the eigenvalues of $Df(0)$ are both zero.
If $f$ is nilpotent, then $f^2$ is superattracting.
Hence Lemmas \ref{main lem: d > 1} and \ref{main lem: d = 1} hold for $f^2$; 
these lemmas hold even for $f$ on  $U_r^{\alpha} \cap \{ |z|\leq r_1, |w| \leq r_2 \}$,
where $r_1$ is enough smaller than $r_2$. 
Consequently, Theorems \ref{main thm: d > 1} and \ref{main thm: d = 1} hold for $f$ itself.

Moreover, we can perturb $f$ slightly 
so that it is not skew product but our results hold.
Let $\tilde{p} (z,w) = z^{\delta} + \sum_{} a_l z^{n_l} w^{m_l}$,
where $n_l \geq \delta$, and $m_l \geq 1$ if $n_l = \delta$,
and let $q$ be the same as above.
Then, for the holomorphic germ of the form $f = (\tilde{p}, q)$,
we have the same lemma and theorem as in the skew product case.

The organization of the paper is as follows.
In Section 2 
we study the properties of the weight $\alpha$, 
and prove Lemmas \ref{main lem: d > 1} and \ref{main lem: d = 1}.
Assuming $d \geq 2$,
we prove that $\phi_n = f_0^{-n} \circ f^n$ is well-defined  
and converges uniformly to $\phi$ on $U_r^{\alpha}$ in Section 3,
and that $\phi$ is injective in Section 4. 
The optimality of $\alpha$ is shown by an example
at the end of Section 4. 
The case $d = 1$ is studied in Section 5.
Finally,
we slightly generalize our results to holomorphic germs in Section 6.

\section{Weights}

We now describe how to associate to a germ $f$ as above 
an interval $\mathcal{I}_f \subset \mathbb{R}$, so that, 
when $\alpha$ is well-defined,
it is given by $\alpha = \max \{ \inf \mathcal{I}_f, 0 \}$.
The interval $\mathcal{I}_f$ provides a wider class of weights
for which all our results hold,
although it does not appear directly in the final conclusions in the introduction.

We define the interval $\mathcal{I}_f$ associated with $f$ as
\[
\mathcal{I}_f = 
\left\{ a \in \mathbb{R} \ \Big| 
\begin{array}{lcr}
a (a \gamma + d) \leq a \delta \text{ and } 
a \gamma + d \leq a n_j + m_j \\ 
\text{for any $j$}
\text{ such that } b_j \neq 0 
\end{array} 
\right\}.
\]
Let $U_{r_1, r_2}^a = \{ |z| < r_1 |w|^{a}, |w| < r_2 \} \cap \{ |z| < r_2 \}$.
We remark that,
unlike the definition of $U_r^{\alpha}$ in the introduction,
this set needs to be intersected with $\{ |z| < r_2 \}$
because $a$ can be negative.

\begin{lemma} \label{weights: d > 1} 
Let $d \geq 2$.
For any number $a$ in $\mathcal{I}_f$, 
it follows that
$q(z,w) \sim z^{\gamma} w^d$ on $U_{r_1, r_2}^{a}$ as $r_1$, $r_2 \to 0$,
and $f(U_{r_1, r_2}^{a}) \subset U_{r_1, r_2}^{a}$ for small $r_1$ and $r_2$. 
\end{lemma}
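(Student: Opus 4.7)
The plan is to first establish the asymptotic $q \sim z^\gamma w^d$ directly from the two conditions defining $\mathcal{I}_f$, and then use it (together with the standard estimate $p \sim z^\delta$) to verify the three inequalities defining $U^a_{r_1,r_2}$ for the image point $f(z,w)$.

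For the asymptotic, I would divide the series for $q$ by $z^\gamma w^d$ term by term:
\[
\frac{q(z,w) - z^\gamma w^d}{z^\gamma w^d} = \sum_j b_j\, z^{n_j - \gamma}\, w^{m_j - d}.
\]
On $U^a_{r_1,r_2}$, substituting $|z| \leq r_1 |w|^a$ into $|z|^{n_j - \gamma}$ (legitimate because $n_j - \gamma \geq 0$) bounds each term by
\[
|b_j|\, r_1^{\,n_j - \gamma}\, |w|^{(a n_j + m_j) - (a \gamma + d)}.
\]
The second condition in $\mathcal{I}_f$ makes the exponent of $|w|$ nonnegative, while the lex-minimality of $(\gamma,d)$ (namely, $n_j = \gamma$ forces $m_j > d$) guarantees that for every $j$ at least one of the two exponents $n_j - \gamma$ and $(an_j + m_j) - (a\gamma + d)$ is strictly positive. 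Hence every term tends to zero uniformly as $r_1, r_2 \to 0$, and absolute convergence of the power series for $q$ on a small polydisc around the origin controls the full sum.

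For the invariance, the standard $|p(z)| \leq 2|z|^\delta$ and the just-proved $|q(z,w)| \leq 2|z|^\gamma |w|^d$, combined with $\delta, \gamma + d \geq 2$, immediately yield $|p(z)|, |q(z,w)| < r_2$ for small $r_1, r_2$. The remaining inequality $|p(z)| < r_1 |q(z,w)|^a$ splits by the sign of $a$. For $a \geq 0$, the first $\mathcal{I}_f$ condition reduces to $a\gamma + d \leq \delta$, so $\delta - a\gamma \geq d \geq 2$; the bound $|p|/|q|^a \lesssim |z|^{\delta - a\gamma}|w|^{-ad}$ together with $|z| \leq r_1 |w|^a$ gives $\lesssim r_1^{\delta - a\gamma} |w|^{a(\delta - a\gamma - d)}$, with both exponents nonnegative, so it is $< r_1$ once $r_1$ is small. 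For $a < 0$, the condition becomes $a\gamma + d \geq \delta$, and I would instead estimate $|p|\,|q|^{|a|} \lesssim |z|^{\delta + |a|\gamma} |w|^{|a|d} \leq r_2^{\,\delta + |a|(\gamma + d)}$ using $|z|, |w| < r_2$ with positive exponents; since the exponent is at least $\delta \geq 2$, this is $< r_1$ as soon as $r_1, r_2$ are jointly small.

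The main obstacle is the dichotomy in how the two constraints defining $U^a_{r_1,r_2}$ are used according to the sign of $a$: for $a > 0$ the narrow cusp $|z| < r_1 |w|^a$ is the binding condition and is substituted into every factor $|z|^{(\cdot)}$, whereas for $a < 0$ that same inequality is weak when $|w|$ is small, and one must fall back on the coarser bound $|z| < r_2$. Organizing both signs (and the borderline $a = 0$, where $\mathcal{I}_f$ degenerates and every remainder term independently vanishes to sufficiently high order) under a single clean argument is the main bookkeeping point.
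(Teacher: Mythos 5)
Your proof is correct and follows the same basic route as the paper's: divide the remainder of $q$ by the leading term $z^\gamma w^d$, substitute the cusp inequality $|z|\leq r_1|w|^a$, and observe that the two $\mathcal{I}_f$ conditions make both resulting exponents nonnegative while the lexicographic minimality of $(\gamma,d)$ forces at least one of them to be $\geq 1$, so the remainder vanishes as $r_1,r_2\to 0$. For the invariance, the paper works entirely in the variable $c=z/w^a$ (so $|c|<r_1$ on $U^a_{r_1,r_2}$) and shows $|p/q^a|\lesssim |c|^{\delta-a\gamma}|w|^{a\delta-a(a\gamma+d)}$; since the second exponent is nonnegative by the first $\mathcal{I}_f$ condition and $\delta-a\gamma\geq 2$ holds for both signs of $a$ (using $d\geq 2$), this gives $|p/q^a|\lesssim |c|^2<|c|<r_1$ uniformly, with no sign split in the estimate itself. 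You instead split on the sign of $a$: your $a\geq 0$ computation is the same thing in different notation, but for $a<0$ you discard the $\mathcal{I}_f$ condition and bound $|p|\,|q|^{|a|}$ crudely by $r_2^{\delta+|a|(\gamma+d)}$, which requires tying $r_2$ to $r_1$ (or taking $r_1=r_2$, as the paper ultimately does anyway). Both routes close the argument, and you additionally spell out the easy checks $|p|,|q|<r_2$ that the paper leaves implicit; the paper's single $|c|$-estimate is just slightly more economical.
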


\begin{proof} 
We first define $\eta (z,w) = (q(z,w) - z^{\gamma} w^d)/ z^{\gamma} w^d$
and show that $\eta \to 0$ on $U_{r_1,r_2}^{a}$ as $r_1$, $r_2 \to 0$,
which implies that $q(z,w) \sim z^{\gamma} w^d$ on $U_{r_1, r_2}^{a}$ as $r_1$, $r_2 \to 0$. 
Let $|z| = |cw^a|$ for any $a$ in $\mathcal{I}_f$.
Then $U_{r_1, r_2}^a \subset \{ |c| < r_1, |w| < r_2 \}$ and 
\[
|\eta|
= \left| \sum \frac{b_j z^{n_j} w^{m_j}}{z^{\gamma} w^{d}} \right| 
= \left| \sum \frac{b_j (cw^{a})^{n_j} w^{m_j}}{(cw^{a})^{\gamma} w^{d}} \right| 
= \left| \sum \frac{b_j c^{n_j} w^{a n_j +m_j}}{c^{\gamma} w^{a \gamma + d}} \right|
\]
\[
\leq \sum |b_j| |c|^{n_j - \gamma} |w|^{(a n_j + m_j) - (a \gamma + d)}.
\]
The conditions $n_j \geq \gamma$ and $a n_j + m_j \geq a \gamma + d$ ensure that
the left-hand side is a power series in $|c|$ and $|w|$, 
and so converges on $\{ |c| < r_1, |w| < r_2 \}$.
Moreover, 
at least one of the inequalities $n_j > \gamma$ 
or $a n_j + m_j > a \gamma + d$ holds
since $n_j \geq \gamma$, and $m_j > d$ if $n_j = \gamma$.
In other words,
$n_j - \gamma \geq 1$ or $(a n_j + m_j) - (a \gamma + d) \geq 1$ holds.
Therefore,
$\eta \to 0$ on $U_{r_1,r_2}^{a}$ as $r_1$, $r_2 \to 0$.

For the invariance of $U_{r_1, r_2}^{a}$,
it is enough to show that
$|p(z)| < r_1 |q(z,w)|^{a}$ for any $(z,w)$ in $U_{r_1, r_2}^a$.
Since
\[
\left| \frac{p(z)}{q(z,w)^{a}} \right| 
\sim \left| \frac{z^{\delta}}{(z^{\gamma} w^{d})^{a}} \right| 
= \left| \frac{(cw^{a})^{\delta}}{((cw^{a})^{\gamma} w^{d})^{a}} \right|
= |c|^{\delta - a \gamma} |w|^{a \delta - a (a \gamma + d)}
\]
on $U_{r_1, r_2}^a$,
we need the conditions $\delta - a \gamma \geq 0$ 
and $a \delta \geq a (a \gamma + d)$.
However,
the condition $\delta - a \gamma \geq 0$ follows from 
the condition $a \delta \geq a (a \gamma + d)$
because $d \geq 2$.
In fact, it follows that $\delta - a \gamma \geq 2$;
if $a \leq 0$ then $\delta - a \gamma \geq \delta \geq 2$, and
if $a > 0$ then $\delta - a \gamma \geq d \geq 2$.
Hence $|p(z)/q(z,w)^{a}| \leq C \cdot |c|^2 \leq |c| < r_1$
for some constant $C$ and sufficiently small $r_1$.
\end{proof}

\begin{lemma}\label{weights: d = 1} 
Let $d = 1$.
For any number $a$ in $\mathcal{I}_f$, 
if $a < (\delta - 1)/ \gamma$,
then $q(z,w) \sim b z^{\gamma} w$ on $U_{r_1, r_2}^{a}$ as $r_1$, $r_2 \to 0$,
and $f(U_{r_1, r_2}^{a}) \subset U_{r_1, r_2}^{a}$ for small $r_1$ and $r_2$. 
\end{lemma}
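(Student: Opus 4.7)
The plan is to follow the same two-step structure as in the proof of Lemma \ref{weights: d > 1}: first show the asymptotic $q \sim bz^{\gamma}w$ by writing the remainder as a convergent power series that vanishes at the origin, then control $|p(z)/q(z,w)^a|$ to obtain invariance. The only substantive change from the $d \geq 2$ case is in the second step, where the inequality $\delta - a\gamma \geq d \geq 2$ no longer comes for free from $d$, so the additional hypothesis $a < (\delta-1)/\gamma$ has to be used to supply the missing room.

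For the first step, I would define $\eta(z,w) = (q(z,w) - bz^{\gamma}w)/(bz^{\gamma}w)$ and parametrize $|z| = |cw^a|$ with $|c| < r_1$. The same computation as before gives
\[
|\eta| \leq \sum |b_j/b|\,|c|^{n_j - \gamma}\,|w|^{(an_j + m_j) - (a\gamma + 1)}.
\]
The conditions $n_j \geq \gamma$ and $an_j + m_j \geq a\gamma + 1$ (from $a \in \mathcal{I}_f$) make this a genuine power series in $|c|$ and $|w|$, and the lex-minimality of $(\gamma, 1)$ forces $n_j - \gamma \geq 1$ or $(an_j + m_j) - (a\gamma + 1) \geq 1$ in each term, so $\eta \to 0$ as $r_1, r_2 \to 0$.

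For the invariance, the crucial estimate is
\[
\left| \frac{p(z)}{q(z,w)^a} \right| \sim |b|^{-a}\,|c|^{\delta - a\gamma}\,|w|^{a(\delta - a\gamma - 1)}
\]
on $U_{r_1, r_2}^a$. The main obstacle is that, unlike the $d \geq 2$ case, the $|w|$-exponent $a(\delta - a\gamma - 1)$ could \emph{a priori} be negative, which would wreck the estimate as $w \to 0$. This is exactly where the extra hypothesis enters: the membership $a \in \mathcal{I}_f$ gives $a(\delta - a\gamma - 1) \geq 0$, while $a < (\delta-1)/\gamma$ gives $\delta - a\gamma - 1 > 0$, and these two together force $a \geq 0$. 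Thus both exponents are non-negative, with $\delta - a\gamma > 1$ strictly. Writing $\delta - a\gamma = 1 + \varepsilon$ with $\varepsilon > 0$, I can then bound $|p(z)/q(z,w)^a| \leq C\,|c|\,r_1^{\varepsilon}\,r_2^{a(\delta - a\gamma - 1)}$ for some constant $C$, which is smaller than $r_1$ for sufficiently small $r_1$ and $r_2$. The remaining invariance conditions $|p(z)| < r_2$ and $|q(z,w)| < r_2$ follow painlessly from $\delta \geq 2$ and from $\gamma \geq 1$ (which is forced, since $d = 1$ alone could not produce a vanishing Jacobian at the origin).
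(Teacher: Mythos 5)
Your proof is correct and takes essentially the same route as the paper: the asymptotic step mirrors Lemma~\ref{weights: d > 1}, and the extra hypothesis $a < (\delta-1)/\gamma$ is used exactly as in the paper to upgrade $\delta - a\gamma \geq 1$ to $\delta - a\gamma > 1$, supplying the $\varepsilon > 0$ needed to close the bound $|p/q^a| \leq C|c|^{1+\varepsilon} \leq |c| < r_1$. Your extra observations — that $a \in \mathcal{I}_f$ already makes the $|w|$-exponent non-negative, and that the edge conditions $|p| < r_2$, $|q| < r_2$ follow from $\delta \geq 2$ and $\gamma \geq 1$ — are correct and simply make explicit what the paper's terser proof leaves implicit.
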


\begin{proof} 
The proof of the asymptotic behavior of $q$ is 
similar to the proof of Lemma \ref{weights: d > 1}.
To prove the invariance of $U_{r_1, r_2}^{a}$,
we need to check that $\delta - a \gamma \geq 0$.
In fact,
the additional condition $a < (\delta - 1)/ \gamma$ implies that $\delta - a \gamma >1$. 
Hence $|p/q^{a}| \leq C \cdot |c|^{1 + \varepsilon} \leq |c| < r_1$
for some constant $C$ and small $r_1$, 
where $\varepsilon = \delta - a \gamma - 1 > 0$. 
\end{proof} 

We show that 
Lemmas \ref{weights: d > 1} and \ref{weights: d = 1} induce
Lemmas \ref{main lem: d > 1} and \ref{main lem: d = 1}, respectively,
at the end of this section.

Let us describe $\mathcal{I}_f$ more practically. 
Let $\alpha_0 = (\delta - d)/\gamma$, 
which is derived from the first condition in the definition of $\mathcal{I}_f$. 
The second condition $a \gamma + d \leq a n_j + m_j$ implies that
\[
a \geq \frac{d - m_j}{n_j - \gamma} 
\]
if $n_j > \gamma$.
We define $m_f$ as
\[
\sup \left\{ \frac{d - m_j}{n_j - \gamma} \ \Big|  
\begin{array}{lr}
b_j\neq 0 \text{  and } n_j > \gamma
\end{array} 
\right\},
\]
where this value is set as $- \infty$ 
when the supremum is taken over the empty set.
Note that $\mathcal{I}_f \subset [m_f, \infty)$.
If $f$ is \textit{trivial}, then $m_f \leq 0$.
If $f$ is \textit{non-trivial}, then $m_f > 0$ and 
we can replace the supremum to the maximum in the definition of $m_f$. 

If $f$ is \textit{trivial}, then we can describe $\mathcal{I}_f$ as follows,
where  $m_f \leq 0$.
\begin{center}
\begin{tabular}{|c|c|c|} \hline
\rule[-5pt]{0pt}{18pt} $f$ \textit{trivial} & $\gamma = 0$ & $\gamma \neq 0$ \\ \hline
\rule[-5pt]{0pt}{18pt} $\delta > d$ & $[0, \infty)$ & $[0, \alpha_0]$ \\ \hline
\rule[-5pt]{0pt}{18pt} $\delta = d$ & $[m_f, \infty)$ & $\{ 0 \}$ \\ \hline
\rule[-5pt]{0pt}{18pt} $\delta < d$ & $[m_f, 0]$ & $[\max \{ m_f, \alpha_0 \}, 0]$ \\ \hline
\end{tabular}
\end{center}
In particular, $\mathcal{I}_f$ is always non-empty if $f$ is \textit{trivial}.
If $f$ is \textit{non-trivial}, then we can describe $\mathcal{I}_f$ as follows,
where $m_f > 0$.
\begin{center}
\begin{tabular}{|c|c|c|} \hline
\rule[-5pt]{0pt}{18pt} $f$ \textit{non-trivial} & $\gamma = 0$ & $\gamma \neq 0$ \\ \hline
\rule[-5pt]{0pt}{18pt} $\delta > d$ & $[m_f, \infty)$ & $[m_f, \alpha_0]$ or $\emptyset$ \\ \hline
\rule[-5pt]{0pt}{18pt} $\delta = d$ & $[m_f, \infty)$ & $\emptyset$ \\ \hline
\rule[-5pt]{0pt}{18pt} $\delta < d$ & $\emptyset$ & $\emptyset$ \\ \hline
\end{tabular}
\end{center}
Note that $\mathcal{I}_f$ can be empty if $f$ is \textit{non-trivial}.
For the case $\delta > d$ and $\gamma \neq 0$,
the interval $\mathcal{I}_f$ is equal to $[m_f, \alpha_0]$ if $m_f \leq \alpha_0$ 
and is empty if $m_f > \alpha_0$.

We may restrict our attention to non-negative weights for our theorems,
although negative weights make sense as in Lemmas \ref{weights: d > 1} and \ref{weights: d = 1}.
Then the assumption $a \geq 0$ reduces 
the condition $a (a \gamma + d) \leq a \delta$
to the condition $a \gamma + d \leq \delta$ unless $a = 0$,
which induces the definition of $\alpha$.
The interval of non-negative numbers 
that satisfy the conditions in the definition of $\alpha$,
coincides with $\mathcal{I}_f \cap [0, \infty)$ if $\delta \geq d$.
For any case, it follows 
that $\alpha$ is well-defined if and only if $\mathcal{I}_f$ is not empty, and
that 
\[
\alpha = \min \mathcal{I}_f \cap [0, \infty) = \max \{ \inf \mathcal{I}_f, 0 \}
\] 
if it is well-defined. 
If $f$ is \textit{trivial}, then $\alpha = 0$.
The next table summarizes the relations 
between $\alpha$ and $m_f$ in the \textit{non-trivial} case.
\begin{center}
\begin{tabular}{|c|c|c|} \hline
\rule[-5pt]{0pt}{18pt} $f$ \textit{non-trivial} & $\gamma = 0$ & $\gamma \neq 0$ \\ \hline
\rule[-5pt]{0pt}{18pt} $\delta > d$ & $m_f$ & $m_f$ or $\nexists$ \\ \hline
\rule[-5pt]{0pt}{18pt} $\delta = d$ & $m_f$ & $\nexists$ \\ \hline
\rule[-5pt]{0pt}{18pt} $\delta < d$ & $\nexists$ & $\nexists$ \\ \hline
\end{tabular}
\end{center}
The notation $m_f$ in the table means that $\alpha$ is well-defined
and coincides with $m_f$.
The notation $\nexists$ means that $\alpha$ is not well-defined.

We are now ready to show 
Lemmas \ref{main lem: d > 1} and \ref{main lem: d = 1}. 

\begin{proof}[Proof of Lemmas \ref{main lem: d > 1} and \ref{main lem: d = 1}] 
We may assume that $\mathcal{I}_f \neq \emptyset$  
since $\alpha$ is well-defined.
If $f$ is \textit{trivial}, then $\alpha = 0 \in \mathcal{I}_f$.
If $f$ is \textit{non-trivial}, then $\alpha = m_f = \min \mathcal{I}_f > 0$.
Therefore,
Lemmas \ref{weights: d > 1} and \ref{weights: d = 1} imply
Lemmas \ref{main lem: d > 1} and \ref{main lem: d = 1}, respectively,
by taking $r$ as $\min \{ r_1, r_2 \}$.
\end{proof}

\section{Existence of the limit $\phi$ for the case $d \geq 2$}

In this section
we show that $\phi_n$ is well-defined and 
converges uniformly to $\phi$ on $U_r$ for the case $d \geq 2$,
where $\phi_n = f_0^{-n} \circ f^n$.
The proof is similar to \cite{u}.
In particular,
the estimate of $\| \Phi_{n+1} - \Phi_n \|$ is almost the same,
where $\Phi_n$ is a lift of $\phi_n$.
However,
we give a much more detailed description of $\phi_n$
and an explicit estimate of $\| \Phi - id \|$ with proofs in this paper,
where $\Phi$ is the limit of $\Phi_n$.
The biholomorphicity of $\phi$ will be proved in the next section,
which completes the proof of Theorem \ref{main thm: d > 1}.
The proof is different from the proof in \cite{u}. 

Before going into the proofs,
we remark on similarities and differences between this paper and \cite{u}.
Although the idea of assigning suitable weights
and the style of the main theorems are the same,
the choices of the major term of $q$,
the definitions of weights and invariant open sets are different. 
There are also several differences 
between settings and results in this paper and \cite{u}.  
The main theorems in this paper do not follow immediately from those in \cite{u}
because 
the two situations can not be connected with a simple conjugacy
even if we can extend the results for polynomial skew products in \cite{u} 
to holomorphic skew products defined near infinity.

Let us prove that $\phi_n$ is well-defined, 
assuming that $d \geq 2$ and that $\alpha$ is well-defined.
Let $p(z) = z^{\delta} (1 + \zeta (z))$
and $q(z,w) = z^{\gamma} w^d (1 + \eta (z,w))$;
Lemma \ref{main lem: d > 1} implies that 
$\zeta$ and $\eta$ are holomorphic on $U_r$ and
converge to $0$ as $r \to 0$. 
Then the first and second components of $f^n$ are written as
\[
z^{\delta^n} \prod_{j = 1}^{n} (1 + \zeta (p^{j - 1} (z)))^{\delta^{n-j}}
\text{ and}
\]
\[
z^{\gamma_n} w^{d^n} 
\prod_{j = 1}^{n - 1} (1 + \zeta (p^{j - 1} (z)))^{\gamma_{n-j}} 
\prod_{j = 1}^{n} (1 + \eta (f^{j - 1} (z,w)))^{d^{n-j}}, 
\]
where $\gamma_n = \sum_{j=1}^{n} \delta^{n-j} d^{j-1} \gamma$.
Using $\zeta$ and $\eta$,
we can also describe $\phi_n$ explicitly.

\begin{proposition}
We can define $\phi_n$ as follows:
\[ 
\phi_n (z,w)
= \left( z \cdot \prod_{j = 1}^{n} \sqrt[\delta^j]{1 + \zeta (p^{j - 1} (z))},
w \cdot \prod_{j = 1}^{n} \frac{\sqrt[d^j]{1 + \eta (f^{j - 1} (z,w))}}
{\sqrt[(\delta d)^j]{\{ 1 + \zeta (p^{j - 1} (z)) \}^{\gamma_j}}} \right),
\]
which is well-defined and so holomorphic on $U_r$.
\end{proposition}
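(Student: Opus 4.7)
My plan is first to show that every root factor appearing in the proposed formula can be chosen as a principal branch of a holomorphic function on $U_r$ (after possibly shrinking $r$), and then to verify by direct algebra that the resulting map coincides with $f_0^{-n}\circ f^n$, in the sense that $f_0^n\circ\phi_n=f^n$.

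For the branch-selection step I would invoke Lemma~\ref{main lem: d > 1}. Since $\zeta$ and $\eta$ are uniformly small on $U_r$ as $r\to 0$ and $f(U_r)\subset U_r$ for small $r$, I may assume $|\zeta|,|\eta|<1/2$ on $U_r$, and by forward invariance $p^{j-1}(z)$ and $f^{j-1}(z,w)$ stay in $U_r$ for all $j\leq n$. On the disk $\{|x|<1/2\}$ the map $x\mapsto\sqrt[k]{1+x}$ has a well-defined principal branch, holomorphic with value $1$ at $0$. With these branches in place each factor in the proposed formula is holomorphic on $U_r$, so $\phi_n$ is a holomorphic map $U_r\to\mathbb{C}^2$ satisfying $\phi_n\sim\mathrm{id}$ as $r\to 0$.

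To identify this map with $f_0^{-n}\circ f^n$, I would check that $f_0^n\circ\phi_n=f^n$, using $f_0^n(z,w)=(z^{\delta^n},z^{\gamma_n}w^{d^n})$. Raising the first component of $\phi_n$ to the power $\delta^n$ immediately reproduces the first component of $f^n$ displayed in the excerpt, since the exponent $1/\delta^j$ becomes $\delta^{n-j}$. For the second, one evaluates $(\phi_n^1)^{\gamma_n}(\phi_n^2)^{d^n}$; the exponent of $(1+\zeta(p^{j-1}(z)))$ that arises from the numerator contribution in $(\phi_n^1)^{\gamma_n}$ combined with the denominator in $(\phi_n^2)^{d^n}$ equals $(\gamma_n-\gamma_j d^{n-j})/\delta^j$, which simplifies to $\gamma_{n-j}$ via the telescoping identity $\gamma_n=\gamma_{n-j}\delta^j+\gamma_j d^{n-j}$. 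This identity is nothing but the splitting of $\gamma_n=\sum_{k=1}^n\delta^{n-k}d^{k-1}\gamma$ into its first $n-j$ and last $j$ terms. The resulting expression matches the displayed second component of $f^n$ factor by factor.

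The main obstacle is bookkeeping rather than anything conceptually deep: the branches of the various roots must be chosen consistently along the forward orbit, and the exponent arithmetic must be tracked carefully. Both are handled uniformly by the forward invariance and uniform smallness of $\zeta,\eta$ coming from Lemma~\ref{main lem: d > 1}, together with the telescoping identity for $\gamma_n$, so no further analytic input is needed.
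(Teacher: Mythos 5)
Your proposal is correct, and it reduces to the same two ingredients as the paper's proof: the branch-choice for the roots is handled by Lemma~\ref{main lem: d > 1} and forward invariance of $U_r$, and the exponent bookkeeping hinges on the identity $\gamma_n = \gamma_{n-j}\delta^j + \gamma_j d^{n-j}$, which is exactly the content of the paper's Lemma~\ref{lem for phi_n} rewritten over a common denominator $\delta^j d^n$. Two presentational differences are worth flagging. First, you verify $f_0^n\circ\phi_n=f^n$ by raising to positive integer powers, whereas the paper manipulates the formal expression $f_0^{-n}\circ f^n$ with fractional exponents and then checks the result is well defined; your direction is a little cleaner since it avoids working with $f_0^{-n}$ before the branches have been pinned down. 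Second, you obtain the exponent identity by splitting the defining sum $\gamma_n=\sum_{k=1}^n\delta^{n-k}d^{k-1}\gamma$ into its first $n-j$ and last $j$ terms, while the paper proves the same identity by computing a closed form for $\gamma_n$ separately in the cases $\delta\neq d$ and $\delta=d$; your telescoping argument is more elementary and avoids the case split. Neither change affects the substance, and both versions give the same proposition.
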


\begin{proof}
Formally, 
$f_0^{-n} (z,w) = (z^{1 / \delta^n}, z^{- \gamma_n / \delta^n d^n} w^{1 / d^n})$
and we can define the first and second components of $\phi_n$ as
\[ 
\left\{ z^{\delta^n} \prod_{j = 1}^{n} (1 + \zeta (p^{j - 1} (z)))^{\delta^{n-j}} \right\}^{1 / \delta^n}
= z \cdot \prod_{j = 1}^{n} \sqrt[\delta^j]{1 + \zeta (p^{j - 1} (z))} \ \ 
\text{ and}
\]
\[ 
\left\{ \dfrac{ z^{\gamma_n} w^{d^n} 
\prod_{j = 1}^{n - 1} (1 + \zeta (p^{j - 1} (z)))^{\gamma_{n-j}} \prod_{j = 1}^{n} (1 + \eta (f^{j - 1} (z,w)))^{d^{n-j}} }
{ \left\{ z^{\delta^n} \prod_{j = 1}^{n} (1 + \zeta (p^{j - 1} (z)))^{\delta^{n-j}} \right\}^{\gamma_n / \delta^n} } \right\}^{1/d^n}
\]
\[
= w \cdot \left\{ \dfrac{ \prod_{j = 1}^{n} (1 + \eta (f^{j - 1} (z,w)))^{d^{n-j}} }
{ \left\{ \prod_{j = 1}^{n - 1} (1 + \zeta (p^{j - 1} (z)))^{\gamma_n/ \delta^j - \gamma_{n-j}} \right\} 
\left( 1 + \zeta (p^{n - 1} (z)))^{\gamma_n/ \delta^n} \right) } \right\}^{1/d^n}.
\]
Lemma \ref{lem for phi_n} below gives the explicit formula of $\phi_n$ above,
and Lemma \ref{main lem: d > 1} ensures that
$\phi_n$ is well-defined and so holomorphic on $U_r$.
\end{proof}

\begin{lemma}\label{lem for phi_n}
For any $1 \leq j \leq n-1$, it follows that
\[
\dfrac{\gamma_n}{\delta^j d^n} - \dfrac{\gamma_{n-j}}{d^n}
=  \dfrac{\gamma_j}{(\delta d)^j}.
\]
\end{lemma}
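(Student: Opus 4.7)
The plan is to prove the identity by direct manipulation of the defining sum $\gamma_n = \gamma \sum_{k=1}^{n} \delta^{n-k} d^{k-1}$. No analytic input is needed here: the statement is a purely combinatorial identity about geometric sums, so the entire proof reduces to index bookkeeping.

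The main step is as follows. I would write
\[
\frac{\gamma_n}{\delta^j} = \gamma \sum_{k=1}^{n} \delta^{n-j-k} d^{k-1} \qquad \text{and} \qquad \gamma_{n-j} = \gamma \sum_{k=1}^{n-j} \delta^{n-j-k} d^{k-1}.
\]
The first $n-j$ terms of the two sums coincide, so the difference telescopes to the tail $\gamma \sum_{k=n-j+1}^{n} \delta^{n-j-k} d^{k-1}$. Reindexing by $l = k - (n-j)$, with $l$ running from $1$ to $j$, rewrites this tail as $\gamma d^{n-j} \sum_{l=1}^{j} \delta^{-l} d^{l-1}$. Dividing by $d^n$ and pulling out the common factor $(\delta d)^{-j}$ gives
\[
\frac{\gamma_n}{\delta^j d^n} - \frac{\gamma_{n-j}}{d^n} = \frac{\gamma}{(\delta d)^j} \sum_{l=1}^{j} \delta^{j-l} d^{l-1},
\]
and the sum on the right is precisely $\gamma_j / \gamma$ by the definition of $\gamma_j$.

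As an independent sanity check I would also verify the identity through the closed form $\gamma_n = \gamma(\delta^n - d^n)/(\delta - d)$ (valid when $\delta \neq d$), which reduces the lemma to the elementary manipulation $(\delta^n - d^n)/(\delta^j d^n) - (\delta^{n-j} - d^{n-j})/d^n = (\delta^j - d^j)/(\delta d)^j$. The degenerate case $\delta = d$ can be handled separately via $\gamma_n = n \delta^{n-1} \gamma$. There is no real obstacle in this lemma; the only care required is in the reindexing of the truncated geometric sum and in the edge cases for $j$ and for $\delta = d$ in the closed-form approach.
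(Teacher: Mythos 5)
Your argument is correct, and it takes a genuinely different route from the paper. The paper splits into two cases: when $\delta \neq d$ it substitutes the closed form $\gamma_n = \gamma(\delta^n - d^n)/(\delta - d)$ and manipulates directly, and when $\delta = d$ it separately uses $\gamma_n = n d^{n-1}\gamma$. Your proof instead works with the defining sum $\gamma_n = \gamma\sum_{k=1}^{n}\delta^{n-k}d^{k-1}$ itself: after dividing by $\delta^j$, the first $n-j$ terms cancel against those of $\gamma_{n-j}$, and the surviving tail reindexes (via $l = k-(n-j)$) into $\gamma d^{n-j}\sum_{l=1}^{j}\delta^{-l}d^{l-1}$, which upon division by $d^n$ is exactly $\gamma_j/(\delta d)^j$. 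This has the advantage of being uniform in $\delta$ and $d$ --- no case split is needed, and no separate formula for the $\delta = d$ degeneracy has to be invoked --- at the modest cost of slightly more index bookkeeping. Your closing remark that the closed form furnishes an independent check is, in effect, a description of the paper's own proof.
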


\begin{proof}
If $\delta \neq d$, 
then $\gamma_n = (\delta^n - d^n) \gamma /(\delta - d)$ and so
\[
\dfrac{\gamma_n}{\delta^j d^n} - \dfrac{\gamma_{n-j}}{d^n}
= \dfrac{\delta^n - d^n}{\delta^j d^n} \cdot \dfrac{\gamma}{\delta - d}
- \dfrac{\delta^{n-j} - d^{n-j}}{d^n} \cdot \dfrac{\gamma}{\delta - d}
\]
\[
= \left( \dfrac{1}{d^j} - \dfrac{1}{\delta^j} \right) \cdot \dfrac{\gamma}{\delta - d}
= \dfrac{\delta^j - d^j}{(\delta d)^j} \cdot \dfrac{\gamma}{\delta - d}
=  \dfrac{\gamma_j}{(\delta d)^j}.
\]
If $\delta = d$, then then $\gamma_n = n d^{n-1} \gamma$ and so
\[
\dfrac{\gamma_n}{\delta^j d^n} - \dfrac{\gamma_{n-j}}{d^n}
= \dfrac{n \gamma}{d^{j+1}} - \dfrac{(n-j) \gamma}{d^{j+1}}
= \dfrac{j \gamma}{d^{j+1}}
= \dfrac{j d^{j-1} \gamma}{d^{2j}}
= \dfrac{\gamma_j}{(\delta d)^j}.
\]
\end{proof}

In order to prove the uniform convergence of $\phi_n$,
we lift $f$ and $f_0$ to $F$ and $F_0$ 
by the exponential product $\pi (z,w) = (e^z, e^w)$; 
that is, 
$\pi \circ F = f \circ \pi$ and $\pi \circ F_0 = f_0 \circ \pi$.
More precisely,
we define
\[
F(Z, W) = (\delta Z + \log (1 + \zeta (e^Z)),
\gamma Z + dW + \log (1 + \eta (e^Z, e^W)))
\]
and $F_0 (Z,W) = (\delta Z, \gamma Z + d W)$;
let  $F_0 = (P_0, Q_0)$.
By Lemma \ref{main lem: d > 1},
we may assume 
\[
\| F - F_0 \| <\varepsilon \text{ on } \pi^{-1} (U_r)
\]
for any small $\varepsilon > 0$, taking $r$ small enough.
Similarly, 
we can lift $\phi_n$ to $\Phi_n$ so that 
the equation $\Phi_n = F_0^{-n} \circ F^n$ holds; thus $\Phi_0 = id$ and, for any $n \geq 1$,
\[
\Phi_n (Z, W) = \left( \frac{1}{\delta^n} P_n(Z), 
\frac{1}{d^n} Q_n(Z,W) - \frac{\gamma_n}{\delta^n d^n} P_n(Z) \right), 
\] 
where $(P_n(Z),Q_n(Z,W)) = F^n(Z,W)$.
Let $\Phi_n = (\Phi_n^1, \Phi_n^2)$. 
Then
\[
|\Phi_{n+1}^1 - \Phi_n^1|
= \left| \frac{P_{n+1}}{\delta^{n+1}} - \frac{P_n}{\delta^n} \right|
= \frac{|P_{n+1} - \delta P_n|}{\delta^{n+1}} < \frac{1}{\delta^{n+1}} \varepsilon
\]
since $|P_{n+1} - \delta P_n| = |P(P_n)- P_0 (P_n)| = |Z \circ (F-F_0)(F^n)| < \varepsilon$, and
\[
|\Phi_{n+1}^2 - \Phi_n^2|
= \left| \left\{ \frac{Q_{n+1}}{d^{n+1}} - 
\frac{\gamma_{n+1} P_{n+1}}{\delta^{n+1} d^{n+1}} \right\}
- \left\{ \frac{Q_n}{d^n} - 
\frac{\gamma_n P_n}{\delta^n d^n} \right\} \right|
\]
\[
= \left| \frac{Q_{n+1}}{d^{n+1}} 
- \frac{\gamma P_n}{d^{n+1}} - \frac{Q_n}{d^n} \right| 
+ \left| \frac{\gamma_{n+1} P_{n+1}}{\delta^{n+1} d^{n+1}}  
- \frac{\gamma_n P_n}{\delta^n d^n} - \frac{\gamma P_n}{d^{n+1}} \right|
\]
\[
= \frac{|Q_{n+1} - (\gamma P_n + d Q_n)|}{d^{n+1}} 
+ \frac{\gamma_{n+1} |P_{n+1} - \delta P_n|}{\delta^{n+1} d^{n+1}} 
< \frac{1}{d^{n+1}} \varepsilon + \frac{\gamma_{n+1}}{\delta^{n+1} d^{n+1}} \varepsilon
\]
since $|Q_{n+1} - (\gamma P_n + d Q_n)| = |Q(F^n) - Q_0(F^n)| = |W \circ (F-F_0)(F^n)| < \varepsilon$.
Hence $\Phi_n$ converges uniformly to $\Phi$.
In particular, 
we can estimate $\| \Phi - id \|$ as follows. 

\begin{lemma}
It follows that
\[
\| \Phi - id \| < \max \left\{ \frac{1}{\delta - 1}, 
\frac{1}{d-1} + \frac{\gamma}{\delta - d} 
\left( \frac{1}{d - 1} - \frac{1}{\delta - 1} \right) \right\} \varepsilon
\text{ if } \delta \neq d, \text{ and}
\]
\[
\| \Phi - id \| < \left\{ \frac{1}{d - 1} + \frac{\gamma}{(d-1)^2} \right\} \varepsilon
\text{ if } \delta = d.
\]
\end{lemma}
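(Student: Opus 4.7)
The plan is to write $\Phi - \mathrm{id}$ as the telescoping sum $\sum_{n=0}^{\infty} (\Phi_{n+1} - \Phi_n)$ and to bound each component by plugging in the two per-step estimates already established in the excerpt, namely
\[
|\Phi_{n+1}^1 - \Phi_n^1| < \frac{\varepsilon}{\delta^{n+1}},
\qquad
|\Phi_{n+1}^2 - \Phi_n^2| < \frac{\varepsilon}{d^{n+1}} + \frac{\gamma_{n+1}}{\delta^{n+1} d^{n+1}}\, \varepsilon.
\]

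For the first coordinate, summation of the geometric series yields $|\Phi^1 - Z| \leq \varepsilon/(\delta-1)$ directly, which already gives the first entry in the maximum when $\delta \neq d$ and (after taking the maximum with the second entry) is absorbed when $\delta = d$. For the second coordinate I would split the bound into two geometric pieces: the $1/d^{n+1}$ part contributes $\varepsilon/(d-1)$ in both cases, while the second piece requires inserting the explicit form of $\gamma_{n+1}$. When $\delta \neq d$, using $\gamma_n = (\delta^n - d^n)\gamma/(\delta - d)$ gives
\[
\frac{\gamma_{n+1}}{\delta^{n+1} d^{n+1}} = \frac{\gamma}{\delta - d}\left( \frac{1}{d^{n+1}} - \frac{1}{\delta^{n+1}} \right),
\]
and the resulting telescoping sum evaluates cleanly to $\frac{\gamma}{\delta - d}\bigl(\frac{1}{d-1} - \frac{1}{\delta - 1}\bigr)$, which matches the claimed second entry in the maximum. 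When $\delta = d$, substituting $\gamma_n = n d^{n-1} \gamma$ reduces the tail to $\sum_{n \geq 0} (n+1)\gamma / d^{n+2}$, and recognizing this as $\gamma/d^2$ times the derivative of a geometric series gives $\gamma/(d-1)^2$, yielding the claimed bound in the equal-degrees case.

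To assemble the final statement it remains only to take the maximum of the first- and second-coordinate bounds under the chosen norm; the second-coordinate bound dominates whenever $\gamma > 0$, while the $1/(\delta-1)$ term is kept in the statement to cover the degenerate case $\gamma = 0$ (where the second bound collapses to $1/(d-1)$, and when $\delta \geq d$ the first term is at most the second).

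The only real obstacle here is bookkeeping: making sure the algebraic manipulation of $\gamma_{n+1}/(\delta^{n+1} d^{n+1})$ in the case $\delta \neq d$ is carried out correctly so that the two telescoping pieces combine into the stated closed form, and separately handling $\delta = d$ by the derivative-of-geometric-series trick. Everything else is a routine application of the triangle inequality to the telescoping series, justified by the uniform convergence of $\Phi_n$ to $\Phi$ that was already established.
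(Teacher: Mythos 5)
Your proposal is correct and follows essentially the same route as the paper: telescope $\Phi - \mathrm{id} = \sum_{n\ge 0}(\Phi_{n+1}-\Phi_n)$, invoke the two per-step bounds, and evaluate the resulting geometric series (using the closed form for $\gamma_n$, split by whether $\delta \neq d$ or $\delta = d$) to obtain the stated constants. The only cosmetic difference is your closing remark about which entry of the maximum dominates, which the paper does not discuss and which is not needed for the proof.
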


\begin{proof}
Since $\| \Phi - id \| = \max \{ | \Phi^1 - Z|, | \Phi^2 - W| \}$,
where $\Phi = (\Phi^1, \Phi^2)$,
\[
\| \Phi - id \| \leq \max \left\{ \sum_{n=0}^{\infty} | \Phi_{n+1}^1 - \Phi_n^1 |, \sum_{n=0}^{\infty} | \Phi_{n+1}^2 - \Phi_n^2 | \right\} 
\]
\[
<\max \left\{  \sum_{n=0}^{\infty} \frac{1}{\delta^{n+1}},  
\sum_{n=0}^{\infty} \frac{1}{d^{n+1}} +  \sum_{n=0}^{\infty} \frac{\gamma_{n+1}}{\delta^{n+1} d^{n+1}} \right\} \varepsilon.
\]
If $\delta \neq d$, then $\gamma_n = (\delta^n - d^n) \gamma /(\delta - d)$ and so
\[
\sum_{n=0}^{\infty} \frac{\gamma_{n+1}}{\delta^{n+1} d^{n+1}} 
= \sum_{n=1}^{\infty} \frac{\gamma_{n}}{\delta^{n} d^{n}}
= \sum_{n=1}^{\infty} \frac{\gamma}{\delta - d} \left( \frac{1}{d^{n}} - \frac{1}{\delta^{n}} \right)
= \frac{\gamma}{\delta - d} \left( \frac{1}{d - 1} - \frac{1}{\delta - 1} \right).
\]
If $\delta = d$, then $\gamma_n = n d^{n-1} \gamma$ and so
\[
\sum_{n=1}^{\infty} \frac{\gamma_{n}}{\delta^{n} d^{n}}
= \sum_{n=1}^{\infty} \frac{n d^{n-1} \gamma}{d^{2n}}
= \frac{\gamma}{d} \cdot \sum_{n=1}^{\infty} \frac{n}{d^{n}}
= \frac{\gamma}{d} \cdot \frac{d}{(d-1)^2}
=  \frac{\gamma}{(d-1)^2}.
\]
\end{proof}

By the inequality $|e^z/e^w - 1| \leq |z - w| e^{|z - w|}$,
the uniform convergence of $\Phi_n$ translates into that of $\phi_n$.
Therefore,
$\phi$ is holomorphic on $U_r - \{ z = 0 \}$,
which extends to $U_r$ by  Riemann's removable singularity theorem.
In particular, if $|\Phi - id| < \varepsilon$, 
then $|\phi - id| < \varepsilon e^{\varepsilon} |id|$.
Therefore, 
$\phi \sim id$ on $U_r^{}$ as $r \to 0$.

\section{Injectivity of $\phi$ and optimality of $\alpha$ for the case $d \geq 2$} 

We continue the proof of Theorem \ref{main thm: d > 1}.
In the previous section we showed that 
$\phi$ is well-defined and so holomorphic on $U_r$.
However,
unlike the one-dimensional case,
the injectivity of $\phi$ does not follow immediately
because the domain $U_r$ may not be a neighborhood of the origin.
In this section we prove that,
after shrinking $r$ if necessary,
the map $\phi$ is actually injective on $U_r$.
More precisely,
the property $\phi \sim id$ suggests the injectivity of $\phi$,
which is ensured by Rouch\'e's theorem. 

Let $\phi = (\phi_1, \phi_2)$ and $U_{r_1, r_2} = \{ |z| < r_1 |w|^{\alpha}, |w| < r_2 \}$.
For simplicity,
we may assume that $\alpha > 0$ and 
that the function $\phi_1$ in $z$ is injective 
because it is conformal at the origin. 
Let us fix small $\varepsilon$, $r_1$ and $r_2$ such that
$|\zeta|$, $|\eta| < \varepsilon$ on $U_{r_1, r_2}$ 
and $f(U_{r_1, r_2}) \subset U_{r_1, r_2}$.
Then $\| F - F_0 \| < \log (1 + \varepsilon)$ on $\pi^{-1} (U_{r_1, r_2})$,
where $F$ is the lift of $f$ by $\pi (Z,W) = (e^Z, e^W)$ and
\[
\pi^{-1} (U_{r_1, r_2}) = \{ \mathrm{Re} (Z - \alpha W) < \log r_1, \mathrm{Re} W < \log r_2 \}.
\]
Let $\Phi (Z,W) = (\Phi_1 (Z), \Phi_Z (W))$ be the lift of $\phi$,
which is holomorphic on $\pi^{-1} (U_{r_1, r_2})$.
The injectivity of $\phi_1$ derives that of $\Phi_1$ because $\Phi_1 \sim id$.
We prove the injectivity of  $\Phi_Z$ in Proposition \ref{biholo of lift} below;
then the injectivity of $\Phi$ derives that of $\phi$ because  $\Phi \sim id$.
Recall that $|\Phi_Z - id| < C \tilde{\varepsilon}$,
where $\tilde{\varepsilon} = \log (1 + \varepsilon)$ and  
\[
C = \frac{1}{d-1} + \frac{\gamma}{\delta - d} 
\left( \frac{1}{d - 1} - \frac{1}{\delta - 1} \right)
\text{ or } C = \frac{1}{d-1} + \frac{\gamma}{(d-1)^2} 
\]
if $\delta \neq d$ or $\delta = d$.
Let $V_Z = V \cap (\{ Z \} \times \mathbb{C})$ 
and $V'_Z = V' \cap (\{ Z \} \times \mathbb{C})$,
where
\[
V = \pi^{-1} (U_{r_1, r_2})
= \left\{ \frac{\mathrm{Re} Z}{\alpha} - \frac{\log r_1}{\alpha} < \mathrm{Re} W < \log r_2 \right\}
\text{ and}
\]
\[
V' =
\left\{ \frac{\mathrm{Re} Z}{\alpha} - \frac{\log r_1}{\alpha} + 2C \tilde{\varepsilon} 
< \mathrm{Re} W < \log r_2 - 2C \tilde{\varepsilon} \right\}
\subset V.
\]

\begin{proposition}\label{biholo of lift}
Let $\alpha > 0$. Then $\Phi_Z$ is injective on $V'_Z$ for any fixed $Z$. 
\end{proposition}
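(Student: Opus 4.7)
The plan is to prove injectivity via Rouché's theorem applied pointwise in $W$. The key inputs are the uniform bound $|\Phi_Z - \mathrm{id}| < C\tilde\varepsilon$ on $V_Z$ already established, together with the geometric fact that $V'_Z$ is obtained from the strip $V_Z$ by shrinking both real-part bounds inward by $2C\tilde\varepsilon$. Consequently, for every $W_0 \in V'_Z$ the closed disk $\overline{D}(W_0, 2C\tilde\varepsilon)$ is contained in $V_Z$, so $\Phi_Z$ is defined and holomorphic on it.

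First I would establish local injectivity: for each $W_0 \in V'_Z$, on the boundary circle $|W - W_0| = 2C\tilde\varepsilon$ the triangle inequality gives
\[
|(\Phi_Z(W) - \Phi_Z(W_0)) - (W - W_0)| \leq |\Phi_Z(W) - W| + |\Phi_Z(W_0) - W_0| < 2C\tilde\varepsilon = |W - W_0|.
\]
Rouché's theorem then implies that the holomorphic functions $\Phi_Z(W) - \Phi_Z(W_0)$ and $W - W_0$ have the same number of zeros inside the disk, namely exactly one, so $W_0$ is the unique preimage of $\Phi_Z(W_0)$ in $\overline{D}(W_0, 2C\tilde\varepsilon)$.

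Next I would promote this to global injectivity on $V'_Z$ by contradiction: suppose distinct $W_1, W_2 \in V'_Z$ satisfy $\Phi_Z(W_1) = \Phi_Z(W_2)$. Then
\[
|W_1 - W_2| = |(\Phi_Z(W_1) - W_1) - (\Phi_Z(W_2) - W_2)| < 2C\tilde\varepsilon,
\]
so $W_2$ lies in $\overline{D}(W_1, 2C\tilde\varepsilon)$. This contradicts the local uniqueness at $W_1$ obtained in the previous step.

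The only delicate point is calibrating the radius $2C\tilde\varepsilon$: it must exceed $2\sup|\Phi_Z - \mathrm{id}|$ so that any hypothetical pair of points with the same image is forced into a single disk, yet the disk itself must still fit in $V_Z$ so that Rouché applies. Both constraints are exactly what the definition of $V'_Z$ encodes, and nothing beyond Rouché's theorem together with the triangle inequality is required.
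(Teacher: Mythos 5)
Your proof is correct, and it rests on the same essential ingredients as the paper's: Rouch\'e's theorem applied to $\Phi_Z$, the uniform bound $|\Phi_Z - \mathrm{id}| < C\tilde{\varepsilon}$ on $V_Z$, and the $2C\tilde{\varepsilon}$ inward buffer built into $V'_Z$. The organization is a bit different, though. The paper picks two putative preimages $W_1, W_2 \in V'_Z$ of the same point and applies Rouch\'e once, to $g(W) = \Phi_Z(W) - \Phi_Z(W_1)$ and $h(W) = W - \Phi_Z(W_1)$ on a simple closed curve $\Gamma \subset V_Z$ chosen to enclose both $W_1$ and $W_2$ at a suitable distance. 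You instead split the argument into two short steps: a local uniqueness statement obtained by Rouch\'e on the fixed circle $|W - W_0| = 2C\tilde{\varepsilon}$ around a single point, followed by the observation (again via the triangle inequality) that any two preimages of the same value under $\Phi_Z$ are automatically within $2C\tilde{\varepsilon}$ of each other, hence trapped in a single such disk. This avoids having to describe an elongated enclosing curve $\Gamma$ and verify the estimate $|h| \geq C\tilde{\varepsilon}$ on it, which the paper does somewhat loosely; your version is cleaner and slightly more self-contained, while the paper's is a touch shorter. Both ultimately depend only on the fact that $\overline{D}(W_0, 2C\tilde{\varepsilon}) \subset V_Z$ for $W_0 \in V'_Z$, which your calibration remark correctly isolates as the geometric content of the definition of $V'_Z$.
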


\begin{proof}
Let $W_1$ and $W_2$ be two points in $V'_Z$
such that $\Phi_Z (W_1) = \Phi_Z (W_2)$,
and show that $W_1 = W_2$. 
Define $g (W) = \Phi_Z (W) - \Phi_Z (W_1)$ and $h (W) = W - \Phi_Z (W_1)$.
Then $|g - h| = |\Phi_Z - id| < C \tilde{\varepsilon}$ on $V_Z$.
By the definition of $V_Z$ and $V'_Z$,
there is a smooth, simply closed curve $\Gamma$ in $V_Z$
whose distances from $W_1$ and $W_2$ are greater than $C \tilde{\varepsilon}$.
Hence $|h| \geq \text{dist} (\Phi_Z(\Gamma), \partial V_Z)
\geq 2C \tilde{\varepsilon} -C \tilde{\varepsilon} = C \tilde{\varepsilon}$ on $\Gamma$.
Therefore,
$|g - h| < |h|$ on $\Gamma$.
Rouch\'e's theorem implies that 
the number of zero points of $g$ is exactly one
in the region surrounded by $\Gamma$;
thus $W_1 = W_2$. 
\end{proof}

\begin{proposition}\label{biholo of phi}
Let $\alpha > 0$. Then $\phi$ is injective on 
\[
\left\{ \frac{|z|}{|w|^{\alpha}} < \frac{r_1}{(1 + \varepsilon)^{2 \alpha C}}, \
|w| < \frac{r_2}{(1 + \varepsilon)^{2C}} \right\}.
\]
\end{proposition}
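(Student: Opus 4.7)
The plan is to transfer the fiberwise injectivity of Proposition \ref{biholo of lift} down through the exponential projection $\pi(Z,W)=(e^Z,e^W)$, exploiting the skew-product structure of $\phi$. First, I would identify the displayed set with $\pi(V')$: writing $|z|=e^{\mathrm{Re}\, Z}$, $|w|=e^{\mathrm{Re}\, W}$ and recalling $\tilde\varepsilon=\log(1+\varepsilon)$, the two defining inequalities of $V'$ exponentiate to $|w|<r_2/(1+\varepsilon)^{2C}$ and $|z|/|w|^\alpha<r_1/(1+\varepsilon)^{2\alpha C}$, which is exactly the set in the statement.

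Next, the explicit formula for $\phi_n$ shows that $\phi_1$ depends only on $z$, and, combined with the section's standing assumption that $\phi_1$ is injective, if $\phi(z_1,w_1)=\phi(z_2,w_2)$ in $\pi(V')$ then $z_1=z_2=:z_0$, so it suffices to show $w_1=w_2$. I would fix $Z$ with $e^Z=z_0$ and pick lifts $W_1,W_2$ of $w_1,w_2$ in the strip $V'_Z$, which is possible because $V'_Z$ is cut out by conditions on $\mathrm{Re}\, W$ alone, hence is invariant under the shift $W\mapsto W+2\pi i$. From $\pi\circ\Phi=\phi\circ\pi$ we get $e^{\Phi_Z(W_1)}=e^{\Phi_Z(W_2)}$, so $\Phi_Z(W_1)-\Phi_Z(W_2)=2\pi i k$ for some $k\in\mathbb{Z}$.

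The key auxiliary identity is $\Phi^2(Z,W+2\pi i)=\Phi^2(Z,W)+2\pi i$, which I would verify from $\Phi_n^2(Z,W)=d^{-n}Q_n(Z,W)-\gamma_n(\delta d)^{-n}P_n(Z)$ together with the induction $Q_n(Z,W+2\pi i)=Q_n(Z,W)+2\pi i\,d^n$ inherited from the definition of $F$, and then pass to the limit in $n$. Granted this, replacing $W_2$ by $W_2-2\pi i k$ keeps it inside $V'_Z$ (only $\mathrm{Re}\, W$ matters), preserves $e^{W_2}=w_2$, and arranges $\Phi_Z(W_1)=\Phi_Z(W_2)$; Proposition \ref{biholo of lift} then forces $W_1=W_2$, so $w_1=w_2$. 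The only genuinely delicate step is the bookkeeping of the $2\pi i\mathbb{Z}$-ambiguity in choosing lifts, which the shift formula resolves cleanly; everything else is a direct deduction from Proposition \ref{biholo of lift} and the shape of $V'$.
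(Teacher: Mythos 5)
Your proposal is correct, and the overall strategy — identify the displayed set with $\pi(V')$, exploit the skew‑product structure to reduce to the second component, and invoke Proposition~\ref{biholo of lift} — is the same as the paper's. Where you go further is in explicitly confronting the $2\pi i$‑lifting ambiguity, which is a real gap in the paper's terse justification. The paper deduces that $\Phi$ is injective on $V'$ and then concludes that $\phi$ is injective on $\pi(V')$ ``because $\Phi\sim id$,'' but this alone does not quite do the job: two points of $\pi(V')$ with the same $\phi$-image lift to $P_1,P_2\in V'$ with $\Phi(P_1)-\Phi(P_2)\in 2\pi i\mathbb{Z}^2$, not necessarily with $\Phi(P_1)=\Phi(P_2)$, and since the fibers $V'_Z$ are unbounded vertical strips one cannot rule out large imaginary offsets by a crude smallness estimate. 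Your shift formula $\Phi^2(Z,W+2\pi i)=\Phi^2(Z,W)+2\pi i$, verified directly from $\Phi_n^2=d^{-n}Q_n-\gamma_n(\delta d)^{-n}P_n$ via the induction $Q_n(Z,W+2\pi i)=Q_n(Z,W)+2\pi i\,d^n$ (using $e^{2\pi i d^n}=1$), is exactly the ingredient needed to normalize the lift of $w_2$ and then apply Proposition~\ref{biholo of lift} on a single strip. (One could alternatively derive the shift formula from $\Phi\sim id$ together with the relation $\pi\circ\Phi=\phi\circ\pi$: the increment lies in $2\pi i\mathbb{Z}$ and is within $2\|\Phi-id\|<2\pi$ of $2\pi i$, hence equals $2\pi i$. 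This is presumably what the paper has in mind, but you have made the argument explicit and self‑contained.) In short: same route, but your treatment closes a bookkeeping gap the paper leaves implicit.
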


\begin{proof}
Since $\Phi_1$ and $\Phi_Z$ are injective for any $Z$ 
by Proposition \ref{biholo of lift}, we deduce that $\Phi$ is injective on $V'$.
Hence
$\phi$ is injective on $\pi (V')$ because  $\Phi \sim id$,
where $\pi (V') = \{ |z/w^{\alpha}| < r'_1, |w| < r'_2 \}$
for some constants $r'_1$ and $r'_2$.
Indeed,
$r'_1 = r_1/(1 + \varepsilon)^{2 \alpha C}$
and $r'_2 = r_2/(1 + \varepsilon)^{2C}$
since $(\log r'_1)/\alpha = (\log r_1)/\alpha - 2C \tilde{\varepsilon}$
and $\log r'_2 = \log r_2 - 2C \tilde{\varepsilon}$.
\end{proof}

\begin{remark}
By similar arguments,
it follows that $F$ is injective on 
\[
\left\{ \frac{Re Z}{\alpha} - \frac{\log r_1}{\alpha} +  \frac{2 \tilde{\varepsilon}}{d} 
< Re W < \log r_2 - \frac{2 \tilde{\varepsilon}}{d} \right\}.
\]
Hence $F^n$, $\Phi_n$ and $\Phi$ are injective on the same region. 
This region is bigger than $V'$ 
since $C \geq 1/(d-1) > 1/d$.
Therefore,
we have a bigger region that ensures the injectivity of $\phi$. 
\end{remark}


We next provide an example which indicates the optimality of $\alpha$.
It is a family of polynomial skew products that
are semiconjugate to polynomial products,
which contains the example $f(z,w) = (z^2, w^2 + z^4)$ in the introduction.
See also \cite{u-fiberwise} and \cite[Section 10]{u} for such maps.

\begin{example}\label{example: optimality}
Let $f(z,w) = (z^d, w^d + c z^{ld})$ and $l = A/B$,
where $d \geq 2$, $A \geq 1$ and $B$ is a divisor of $d$.
Then $f$ is semiconjugate to a product
$g(z,w) = (z^d, w^d + c)$ by $\pi (z,w) = (z^B, z^A w): \pi \circ g = f \circ \pi$.
We can construct the B\"{o}ttcher coordinate 
that conjugates $f$ to $f_0 (z,w) = (z^d, w^d)$ as follows.
Let $\varphi_g$ be the B\"{o}ttcher coordinate
for $w \to w^d + c$ near infinity;
it is defined on $\{ |w| > R \}$ for large $R$
and conjugates $w \to w^d + c$ to $w \to w^d$.
Then $\phi_g (z,w) = (z, \varphi_g (w))$ is
a biholomorphic map 
that conjugates $g$ to $g_0 (z,w) = (z^d, w^d)$.
Consequently,
$\phi_f = \pi \circ \phi_g \circ \pi^{-1}$ or, equivalently, 
$\phi_f (z,w) = \left( z, z^{l} \varphi_g \left( w/z^{l} \right) \right)$ is the required map;
it is a well-defined biholomorphic map defined on $\{ |w| > R|z|^{l} \}$
that conjugates $f$ to $f_0$.
\end{example}

Let us explain the optimality of $\alpha$, using this example.
Note that $\alpha = 1/l = B/A$ and $\mathcal{I}_f = \{ a \geq \alpha \}$.
As stated above,
the B\"{o}ttcher coordinate $\phi_f$ exists on $U_r^{\alpha}$ for small $r$.
This also follows from Theorem \ref{main thm: d > 1}.
Whereas 
we can replace $\alpha$ in Theorem \ref{main thm: d > 1}
with any $a \geq \alpha$ that belongs to $\mathcal{I}_f$,
we can not replace it with any $a < \alpha$;
that is,
$\phi_f$ can not extend from $U_r^{\alpha}$ to $U_r^a$ for any $a < \alpha$.
In fact, 
if $\phi_f$ extended to $U_r^a$ for some $a < \alpha$,
then $\varphi_g$ could extend to $\mathbb{C}$,
because the closure of $\pi^{-1} (U_r^a)$ includes the $w$-axis. 
However,
$\varphi_g$ can not extend to a region larger than the attracting basin of infinity,
except the special case $c = 0$.
Moreover,
it seems that the invariance of $U_r^a$ does not hold for any $a < \alpha$.

\section{The case $d = 1$}

We extend our ideas and results for the case $d \geq 2$ to the case $d = 1$;
we prove Theorem \ref{main thm: d = 1}. 
The proof of the uniform convergence of $\phi_n$ is different from the previous case
because the sum of $d^{-n}$ does not converge anymore.
The condition $\alpha < (\delta - 1)/ \gamma$ is necessary for Theorem \ref{main thm: d = 1},
which is shown by an example at the end of this section.

We remark that $f$ is rigid of Class 4 in \cite{f} if $d = 1$
and hence it follows that
$f$ is conjugate to $(z,w) \to (z^{\delta}, b z^{\gamma} w + R(z))$
for some polynomial $R$.
Furthermore, we can remove $R$ thanks to Theorem \ref{main thm: d = 1}. 
We also remark that the case $d = 0$ exists,
but it is not treated in this paper because the map $f_0 (z,w) = (z^2, bz)$ is not dominant.

Let us prove Theorem \ref{main thm: d = 1}.
Since the investigation of the second components of maps 
is the essential part for proofs,
we sometimes omit the expressions of the first components hereafter. 
In a similar fashion to the case $d \geq 2$,
let $\eta = (q - bz^{\gamma}w)/bz^{\gamma}w$.
Then $|Q - Q_0| = |\log (1 + \eta)|$.
Since we may assume that $|\eta| < 1$,
\[
|Q - Q_0| \leq \log (1 + |\eta|) \leq |\eta|
\text{ and so } |Q(F^n) - Q_0(F^n)|  \leq |\eta (F^n)|.
\]
To prove the uniform convergence of $\phi_n$,
we show that $|\eta (F^n)|$ or, equivalently, $|\eta (f^n)|$ 
decreases rapidly as $n \to \infty$ in Lemma \ref{lem2: d = 1}.
First, we claim that $f^n$ contracts $U_r$ rapidly.
Since the origin is superattracting,
it is clear that $f^n$ contracts a small bidisk rapidly;
e.g., 
$f^n (\{ |z|<r, |w|<r \}) \subset \{ |z|<r/2^n, |w|<r/2^n \}$.
Moreover,
the same contraction holds for $U_r$,
where $U_r =\{ |z| < r|w|^{\alpha}, |w| < r \}$.

\begin{lemma}\label{lem1: d = 1}
Let $d = 1$.
If $\alpha$ is well-defined and $\alpha < (\delta - 1)/ \gamma$, 
then $f^n(U_r) \subset U_{r/2^n}$ for small $r$.
\end{lemma}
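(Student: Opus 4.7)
The plan is to reduce the claim to a one-step contraction $f(U_s) \subset U_{s/2}$ valid uniformly for every $s \leq r$ with $r$ fixed small, and then iterate on $n$. Writing $p(z) = z^\delta(1+\zeta(z))$ and $q(z,w) = bz^\gamma w(1+\eta(z,w))$ as in Section 2, Lemmas \ref{main lem: d = 1} and \ref{weights: d = 1} tell us that $|\zeta|$ and $|\eta|$ can be made as small as we wish on $U_r$ by shrinking $r$. Note that $\gamma \geq 1$, since $\gamma + d \geq 2$ and $d = 1$.

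For the second-coordinate estimate, fix $s \leq r$ and $(z,w) \in U_s$. Then $|z| < s|w|^\alpha$ and $|w| < s$ give
\[
|q(z,w)| \leq (1+|\eta|)|b|\,|z|^{\gamma}|w| \leq 2|b|\, s^{\gamma(1+\alpha)+1}.
\]
Since $\gamma(1+\alpha) > 0$, choosing $r$ so small that $2|b|r^{\gamma(1+\alpha)} \leq 1/2$ forces $|q(z,w)| \leq s/2$ for every $s \leq r$.

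For the first-coordinate estimate, set $|z| = |c||w|^\alpha$ with $|c| < s$; the computation in the proof of Lemma \ref{weights: d = 1} then gives
\[
\left|\frac{p(z)}{q(z,w)^\alpha}\right| \leq C_1 |c|^{\delta-\alpha\gamma}|w|^{\alpha(\delta-\alpha\gamma-1)}
\]
for some constant $C_1$ depending only on $b$, $\alpha$ and the size of $\zeta$, $\eta$. The hypothesis $\alpha < (\delta-1)/\gamma$ ensures $\delta - \alpha\gamma - 1 > 0$, so both exponents are nonnegative and, bounding $|c|, |w| < s$,
\[
\left|\frac{p(z)}{q(z,w)^\alpha}\right| \leq C_1 s^{\beta}, \qquad \beta = (1+\alpha)(\delta-\alpha\gamma)-\alpha = 1 + (1+\alpha)(\delta-\alpha\gamma-1) > 1.
\]
Shrinking $r$ once more so that $C_1 r^{\beta-1} \leq 1/2$ yields $|p(z)| \leq (s/2)|q(z,w)|^\alpha$ uniformly for $s \leq r$. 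Combining the two bounds gives $f(U_s) \subset U_{s/2}$ for every $s \leq r$.

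The lemma now follows by induction on $n$: assuming $f^n(U_r) \subset U_{r/2^n}$, applying the one-step statement to $s = r/2^n \leq r$ gives $f^{n+1}(U_r) \subset f(U_{r/2^n}) \subset U_{r/2^{n+1}}$. The crux is the strict inequality $\alpha < (\delta-1)/\gamma$: in Lemma \ref{weights: d = 1} this inequality was used only to obtain $\delta - \alpha\gamma > 1$ and hence the invariance $f(U_r) \subset U_r$; here it is reused to produce the strict exponent $\beta > 1$, which is precisely what creates the room needed to extract the geometric contraction factor $1/2$. Without this strict gap, we could only reproduce the invariance statement, not the iterated shrinking claimed here.
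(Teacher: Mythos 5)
Your proof is correct and takes essentially the same approach as the paper's: the same exponent computations driven by $\delta - \alpha\gamma > 1$ (i.e.\ $\alpha < (\delta-1)/\gamma$) and $\gamma \geq 1$ give a single-step contraction, which is then iterated. The only cosmetic difference is that the paper proves the pointwise bounds $|p(z)/q(z,w)^\alpha| < |c|/2$ and $|q(z,w)| < |w|/2$ (with $|z|=|cw^\alpha|$) and says ``repeat the calculation,'' whereas you phrase it as the set-wise contraction $f(U_s) \subset U_{s/2}$ uniformly in $s \le r$ and close it with an explicit induction — a slightly cleaner way to make the iteration rigorous.
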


\begin{proof}
By Lemma \ref{main lem: d = 1},
for any small $\varepsilon$
there is $r$ such that
\[
|p(z)| < (1 + \varepsilon) |z^{\delta}| \text{ and } 
(1 - \varepsilon) |bz^{\gamma}w| < |q(z,w)| < (1 + \varepsilon) |bz^{\gamma}w| 
\]
on $U_r$.
Let $|z| = |cw^{\alpha}|$.
Then $U_r \subset \{ |c| < r, |w| < r \}$ and
\[
\left| \dfrac{p(z)}{q(z,w)^{\alpha}} \right|
< \dfrac{1 + \varepsilon}{(1 - \varepsilon)^{\alpha}} \cdot \left| \dfrac{z^{\delta}}{(bz^{\gamma}w)^{\alpha}} \right|
= \dfrac{1 + \varepsilon}{(1 - \varepsilon)^{\alpha}} \cdot \left| \dfrac{(cw^{\alpha})^{\alpha}}{\{ b(cw^{\alpha})^{\gamma}w \}^{\alpha}} \right|
\]
\[
= \dfrac{1 + \varepsilon}{(1 - \varepsilon)^{\alpha}} \cdot \dfrac{1}{|b|^{\alpha}} 
\cdot |c|^{\delta - \alpha \gamma} |w|^{\alpha \delta - \alpha (\alpha \gamma + 1)}. 
\]
By assumption,
$\delta - \alpha \gamma > 1$ and $\alpha \{ \delta - (\alpha \gamma + 1) \} \geq 0$.
Therefore,
shrinking $r$ so that
$(1 + \varepsilon)r^{\delta - \alpha \gamma - 1}/(1 - \varepsilon)^{\alpha} |b|^{\alpha} < 1/2$,
we obtain that
\[
\left| p/q^{\alpha} \right| < |c|/2 < r/2.
\]
In addition, since
$|q(z,w)| < (1 + \varepsilon) |b(cw^{\alpha})^{\gamma}w| =  (1 + \varepsilon) |b| |c|^{\gamma} |w|^{\alpha \gamma} \cdot |w|$,
\[
| q | < |w|/2 < r/2
\]
for $r$ such that $(1 + \varepsilon) |b| r^{\gamma (\alpha + 1)} < 1/2$.
This implies that 
\[
f( \{ |c| < r, |w| < r \} ) \subset \{ |c| < r/2, |w| < r/2 \};
\text{ that is, } f(U_r) \subset U_{r/2}.
\]
By repeating this calculation,
it follows that 
\[
f^n( \{ |c| < r, |w| < r \} ) \subset \{ |c| < r/2^n, |w| < r/2^n \};
\text{ that is, } f^n(U_r) \subset U_{r/2^n}.
\]
\end{proof}

Lemma \ref{lem1: d = 1} derives the uniform estimate of $|\eta (f^n)|$ on  $U_r$.

\begin{lemma}\label{lem2: d = 1}
Let $d = 1$.
If $\alpha$ is well-defined and $\alpha < (\delta - 1)/ \gamma$, 
then 
\[
|\zeta (p^n)| \leq \dfrac{C_1 r}{2^n} \text{ and } |\eta (f^n)| \leq \dfrac{C_2 r}{2^n}
\]
on $U_r$
for some constants $C_1$ and $C_2$.
\end{lemma}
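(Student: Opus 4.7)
The plan is to combine Lemma \ref{lem1: d = 1} with Taylor-style bounds on $\zeta$ and $\eta$ near the origin. Lemma \ref{lem1: d = 1} already gives $f^n(U_r) \subset U_{r/2^n}$ for $r$ small, so it suffices to establish that, on $U_s$ with $s$ small, $\zeta$ is $O(s)$ and $\eta$ is $O(s)$; the desired estimates then follow by taking $s = r/2^n$ and evaluating at $f^n(z,w)$.

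For $\zeta$, the hypothesis $p(z) = z^\delta + O(z^{\delta + 1})$ makes $\zeta(z) = p(z)/z^\delta - 1$ holomorphic in a neighborhood of $0$ with $\zeta(0) = 0$, so $|\zeta(z)| \leq C_1'|z|$ on a small disk. On $U_s$ with $\alpha \geq 0$ and $s \leq 1$ one has $|z| < s|w|^\alpha \leq s^{1+\alpha} \leq s$, so the first coordinate $p^n(z)$ of $f^n(z,w) \in U_{r/2^n}$ satisfies $|p^n(z)| \leq r/2^n$, yielding $|\zeta(p^n(z))| \leq C_1\, r/2^n$.

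For $\eta$, I would reuse the series bound from the proof of Lemma \ref{weights: d > 1}, which is equally valid in the $d = 1$ setting of Lemma \ref{weights: d = 1}: writing $z = cw^\alpha$,
\[
|\eta(z,w)| \leq \sum_j |b_j|\, |c|^{n_j - \gamma}\, |w|^{(\alpha n_j + m_j) - (\alpha \gamma + d)}.
\]
The minimality of $(\gamma, d)$ in the lexicographic order forces, for each $j$ with $b_j \neq 0$, either $n_j - \gamma \geq 1$, or (when $n_j = \gamma$) $m_j - 1 \geq 1$ since $d = 1$ and $m_j > d$. Hence every summand carries an extra factor of $|c|$ or $|w|$, and pulling $\max(|c|, |w|)$ out of the still-convergent remaining series gives $|\eta(z,w)| \leq C_2 \max(|c|, |w|)$ on $U_s$ for small $s$. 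Applied at $f^n(z,w) \in U_{r/2^n}$, where the corresponding $|c_n|$ and $|w_n|$ are both bounded by $r/2^n$, this yields $|\eta(f^n(z,w))| \leq C_2\, r/2^n$.

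The only delicate point is the factorization step in the $\eta$ estimate: one needs to check that the per-term dichotomy from the minimality of $(\gamma, d)$ really does extract a uniform factor of $\max(|c|, |w|)$, and that the residual series remains uniformly bounded on $U_r$. Once that bookkeeping is in place, everything reduces to Lemma \ref{lem1: d = 1} together with elementary Taylor estimates.
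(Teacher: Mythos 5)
Your proof is correct and follows essentially the same route as the paper: you write $z = cw^\alpha$, extract a factor of $|c|$ or $|w|$ from each term of the $\eta$-series using the lexicographic minimality of $(\gamma,d)$ together with the defining inequalities for $\alpha$, obtain $|\eta| \leq A|c| + B|w|$ on $U_r$, and then apply Lemma \ref{lem1: d = 1} to get the $2^{-n}$ decay. Your explicit $\zeta$ estimate (via $\zeta(0)=0$ and the containment $|p^n(z)| < r/2^n$ forced by $f^n(U_r)\subset U_{r/2^n}$) is a harmless addition that the paper's proof actually leaves implicit.
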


\begin{proof}
Let $|z| = |cw^{\alpha}|$.
Then 
\[
|\eta|
= \left| \sum \frac{b_j z^{n_j} w^{m_j}}{bz^{\gamma} w} \right| 
\leq \sum \left| \dfrac{b_j}{b}  \right| \cdot |c|^{n_j - \gamma} |w|^{(a n_j + m_j) - (a \gamma + 1)}.
\]
By assumption,
$n_j \geq \gamma$ and $\alpha n_j + m_j \geq \alpha \gamma + 1$.
Moreover, 
at least one of the inequalities $n_j - \gamma \geq 1$ 
or $(\alpha n_j + m_j) - (a \gamma + 1) \geq 1$ holds.
Hence there exist constants $A$ and $B$ such that
$|\eta| \leq A |c| + B |w|$.
It then follows from Lemma \ref{lem1: d = 1} that 
$|\eta (f^n)| \leq A r/2^n + B r/2^n = (A + B)r/2^n$ on $U_r$.
\end{proof}

Now we are ready to prove the uniform convergence of $\phi_n$.

\begin{proposition}
Let $d = 1$.
If $\alpha$ is well-defined and $\alpha < (\delta - 1)/ \gamma$, 
then $\phi_n$ converges uniformly to $\phi$ on $U_r$,
and $\phi \sim id$ on $U_r^{}$ as $r \to 0$.
\end{proposition}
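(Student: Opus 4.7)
The plan is to mirror the strategy of Section 3: lift everything by $\pi(Z,W) = (e^Z, e^W)$ and estimate the telescoping differences $\Phi_{n+1} - \Phi_n$ on $\pi^{-1}(U_r)$. For $d = 1$ the model lift is $F_0(Z,W) = (\delta Z,\ \gamma Z + W + \log b)$, and an easy induction gives $F_0^n(Z,W) = (\delta^n Z,\ \gamma_n Z + W + n \log b)$ with $\gamma_n = \gamma(\delta^n - 1)/(\delta - 1)$. Writing $F^n = (P_n, Q_n)$, we obtain
\[
\Phi_n(Z,W) = \left( \frac{P_n}{\delta^n},\ Q_n - \frac{\gamma_n}{\delta^n} P_n - n \log b \right).
\]

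The principal obstacle compared to $d \geq 2$ is that the geometric factor $1/d^{n+1}$ in $\Phi_{n+1}^2 - \Phi_n^2$ has disappeared, so the crude bound $\|F - F_0\| < \varepsilon$ used in Section 3 no longer yields a summable series. Lemma \ref{lem2: d = 1} was designed precisely to remedy this, by furnishing the pointwise decay $|\zeta(p^n)|,\ |\eta(f^n)| \leq Cr/2^n$ on $U_r$.

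Using the recursions $P_{n+1} = \delta P_n + \log(1 + \zeta(p^n))$ and $Q_{n+1} = \gamma P_n + Q_n + \log b + \log(1 + \eta(f^n))$, together with the identity $\gamma_{n+1} - \delta \gamma_n = \gamma$, a short computation produces the telescoping differences
\[
\Phi_{n+1}^1 - \Phi_n^1 = \frac{\log(1+\zeta(p^n))}{\delta^{n+1}}, \qquad \Phi_{n+1}^2 - \Phi_n^2 = \log(1+\eta(f^n)) - \frac{\gamma_{n+1}}{\delta^{n+1}} \log(1+\zeta(p^n)).
\]
Since $\gamma_{n+1}/\delta^{n+1} \leq \gamma/(\delta - 1)$ is uniformly bounded and $|\log(1+x)| \leq 2|x|$ for small $x$, Lemma \ref{lem2: d = 1} bounds both differences by $O(r/2^n)$, which is summable. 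Hence $\Phi_n$ converges uniformly on $\pi^{-1}(U_r)$ to a limit $\Phi$ with $\|\Phi - id\| \leq Kr$ for a constant $K$ independent of $r$.

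To conclude, the inequality $|e^a/e^b - 1| \leq |a-b|\,e^{|a-b|}$ translates uniform convergence of $\Phi_n$ into uniform convergence of $\phi_n$ on $U_r \setminus \{z = 0\}$ to a holomorphic limit $\phi$, which extends across $\{z = 0\}$ by Riemann's removable singularity theorem. The same inequality applied to $\Phi - id$ yields $|\phi - id| \leq Kr\,e^{Kr}|id|$, and hence $\phi \sim id$ on $U_r$ as $r \to 0$.
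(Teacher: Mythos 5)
Your proof is correct and follows essentially the same route as the paper's: you lift by $\pi(Z,W)=(e^Z,e^W)$, compute the telescoping differences $\Phi_{n+1}-\Phi_n$, and observe that the geometric factor $1/d^{n+1}$ is absent when $d=1$, which is exactly why Lemma \ref{lem2: d = 1} is needed to supply the summable decay $|\zeta(p^n)|,|\eta(f^n)|=O(r/2^n)$. The only differences from the paper's proof are cosmetic — you spell out the exact form of the telescoping difference (using the identity $\gamma_{n+1}-\delta\gamma_n=\gamma$) and track the $\log b$ term in the lift explicitly, whereas the paper reuses the Section 3 estimate and is terser about the constant in $|\log(1+x)|\le 2|x|$.
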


\begin{proof}
It is enough to show the uniform convergence of $\Phi_n^2$.
By Lemma \ref{lem2: d = 1},
\[
|\Phi_{n+1}^2 - \Phi_n^2|
\leq \frac{|Q(F^n) - Q_0(F^n)|}{d^{n+1}} 
+ \frac{\gamma_{n+1} |P(P^n) - P_0(P^n)|}{\delta^{n+1} d^{n+1}} 
\]
\[
\leq |\eta (F^n)| + \frac{\gamma}{\delta - 1} |\zeta (P^n)|
< \left( C_2 + \frac{\gamma}{\delta - 1} C_1 \right)  \frac{r}{2^n}.
\]
\end{proof}

The proof of the injectivity of $\phi$ is the same as the case $d \geq 2$.

\begin{proposition}
Let $d = 1$.
If $\alpha$ is well-defined and $\alpha < (\delta - 1)/ \gamma$, 
then $\phi$ is injective on $U_r$ for small $r$.
\end{proposition}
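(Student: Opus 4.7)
The plan is to follow Section~4 (Propositions~\ref{biholo of lift} and~\ref{biholo of phi}) essentially verbatim, the only change being that the $\varepsilon$-bound used there is replaced by the $r$-bound that the preceding proposition supplies. First I would pass to lifts via $\pi(Z,W)=(e^Z,e^W)$ and write $\Phi=(\Phi_1(Z),\Phi_Z(W))$, where $\Phi_Z$ denotes the restriction of the second component of $\Phi$ to the vertical slice $\{Z\}\times\mathbb{C}$. Telescoping the per-step bound from the previous proposition yields the uniform estimate $|\Phi-id|<Cr$ on $V=\pi^{-1}(U_r)$, where $C$ depends only on $\delta$, $\gamma$ and the coefficients of $q$.

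With this estimate in hand, I would treat the two components separately. The first component $\Phi_1$ depends only on $Z$ and is a lift of the one-dimensional B\"{o}ttcher coordinate for $p$, so its injectivity follows from the classical B\"{o}ttcher theorem. For the second, fix $Z$, let $V_Z$ be the vertical slice of $V$, and let $V'_Z$ be obtained from $V_Z$ by removing a strip of width $2Cr$ from each boundary component. Given $W_1,W_2\in V'_Z$ with $\Phi_Z(W_1)=\Phi_Z(W_2)$, I would choose a smooth, simply closed curve $\Gamma\subset V_Z$ that encloses both points and stays at distance at least $2Cr$ from each. Applying Rouch\'{e}'s theorem to $g(W)=\Phi_Z(W)-\Phi_Z(W_1)$ and $h(W)=W-\Phi_Z(W_1)$, the inequality $|g-h|=|\Phi_Z-id|<Cr\leq |h|$ on $\Gamma$ forces $g$ to have a single zero inside $\Gamma$, so $W_1=W_2$.

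With $\Phi_1$ and every $\Phi_Z$ injective, $\Phi$ itself is injective on the shrunken open set $V'$. Projecting back through $\pi$ and invoking $\Phi\sim id$ then gives injectivity of $\phi$ on $\pi(V')$, which is again a set of the form $U_{r'}$ with $r'$ only slightly smaller than $r$. Since the radius is at our disposal, choosing the starting $r$ small enough absorbs this shrinkage and yields the claimed injectivity of $\phi$ on $U_r$.

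I do not anticipate a serious obstacle. The Rouch\'{e} step is essentially mechanical once $|\Phi-id|<Cr$ is in hand, and the only conceptual novelty relative to the $d\geq 2$ case is that this bound now relies on the rapid contraction $f^n(U_r)\subset U_{r/2^n}$ from Lemma~\ref{lem1: d = 1}, rather than on a direct $\varepsilon$-estimate of $F-F_0$; this distinction is invisible to the subsequent argument.
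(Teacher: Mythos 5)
Your proposal is correct and follows the paper's intended approach: the paper itself dismisses this step with ``The proof of the injectivity of $\phi$ is the same as the case $d \geq 2$,'' and you reconstruct exactly that argument — lift to $\Phi$, treat $\Phi_1$ as the one-dimensional B\"{o}ttcher conjugacy, apply Rouch\'{e} to $\Phi_Z$ on each vertical slice after shrinking the domain, and push the injectivity of $\Phi$ back down to $\phi$ via $\pi$. You also correctly identify the one genuine adaptation needed: the Section~4 constant $C$ in $\|\Phi - id\| < C\tilde{\varepsilon}$ contains a factor $1/(d-1)$ and so diverges when $d=1$, and it must be replaced by the $O(r)$ bound that telescoping the estimate $|\Phi_{n+1}^2 - \Phi_n^2| \lesssim r/2^n$ from the preceding proposition (which in turn rests on $f^n(U_r)\subset U_{r/2^n}$) provides; once that substitution is made, the Rouch\'{e} step and the remainder of the argument are unchanged.
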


Finally,
we  exhibit the following two examples.
The first example satisfies all the conditions of Theorem \ref{main thm: d = 1},
and the second one does not.

\begin{example}
Let $f(z,w) = (z^2, b zw + z^3)$.
Then $\alpha = 1/2 < (\delta - 1)/ \gamma = 1$.
By Theorem \ref{main thm: d = 1},
there exists the B\"{o}ttcher coordinate on $U_r$ for small $r$,
that conjugates $f$ to $f_0 (z,w) = (z^{\delta}, b zw)$. 
\end{example}

Note that if $\alpha$ is well-defined, then 
$\alpha \leq (\delta - 1)/ \gamma$. 
The second example satisfies the equation $\alpha = (\delta - 1)/ \gamma$.

\begin{example}
Let $f(z,w) = (z^2, b zw + z^2)$.
Then $\alpha = (\delta - 1)/ \gamma = 1$, and
$f$ is semiconjugate to $g(z,w) = (z^2, b w + 1)$ 
by $\pi (z,w) = (z, zw): \pi \circ g = f \circ \pi$.
Moreover, if $b \neq 1$, then
$f$ is conjugate to $f_0 (z,w) = (z^2, b zw)$ 
by $h_f$, and 
$g$ is conjugate to $g_0 (z,w) = (z^2, b w)$  
by $h_g$,
where $h_f(z,w) = (z, w + z/(1 - b))$
and $h_g(z,w) = (z, w + 1/(1 - b))$. 
\end{example}

For this example,
Theorem \ref{main thm: d = 1} does not hold at least if $b = 1$.
In fact,
if we had a B\"{o}ttcher coordinate that conjugated $f$ to $f_0 (z,w) = (z^2, zw)$,
then $g$ should be conjugate to $g_0 (z,w) = (z^2, w)$.
However,
the translation $w \to w + 1$ can not be conjugate to the identity $w \to w$.
Although an conjugacy $h_f$ exists if $b \neq 1$,
the dynamics is different from our case.
In particular,
the second component of $g_0$ in this example is affine,
whereas the second component of $g_0$ in Example \ref{example: optimality}
is $w^d$, where $d \geq 2$, 
and so it has a superattracting fixed point at infinity.
We can slightly generalize this example
to $f(z,w) = (z^{\delta}, b z^{\delta - 1} w + z^{\delta})$.
Since $\gamma = \delta - 1$, 
again $\alpha = (\delta - 1)/ \gamma = 1$, 
and $f$ is semiconjugate to $g(z,w) = (z^{\delta}, bw + 1)$ 
by $\pi (z,w) = (z^{\delta - 1}, z^{\gamma} w)$. 

\section{A generalization to holomorphic germs}

Until now
we have dealt with a germ 
of holomorphic skew product of the form $f(z,w) = (p(z),q(z,w))$
such that $p(z) = z^{\delta} + a_{\delta + 1} z^{\delta + 1} + \cdots$
and 
\[
q(z,w) = b z^{\gamma} w^d + \sum_{} b_j z^{n_j} w^{m_j},
\]
where $b \neq 0$, $\gamma \leq n_j$, and $d < m_j$ if $\gamma = n_j$.
Since the origin is a superattracting fixed point, 
$\delta \geq 2$, $\gamma + d \geq 2$ and $n_j + m_j \geq 2$.
In this section
we perturb $p$ to a holomorphic germ $\tilde{p}$ in $z$ and $w$
such that $\tilde{p}(z,w) = a(w) z^{\delta} + a_{\delta + 1}(w) z^{\delta + 1} + \cdots$,
where $a(0) = 1$.
In other words,
\[
\tilde{p} (z,w) = z^{\delta} + \sum_{} a_l z^{n_l} w^{m_l},
\]
where $n_l \geq \delta$, and $m_l \geq 1$ if $n_l = \delta$.
Let $f(z, w) = (\tilde{p}(z, w), q(z, w))$ hereafter.

We first construct a biholomorphic map $\phi$ that conjugate $f$ to $f_0$
by arguments similar to the skew product case,
where $f_0(z,w) = (z^{\delta}, bz^{\gamma} w^d)$.
It is more difficult to prove the injectivity of $\phi$
because $f$ does not preserve the family of fibers anymore.
We then give another proof of $f$ being conjugate to $f_0$.
In fact, it follows from \cite{r} that $f$ is conjugate to 
a holomorphic germ of the form $\tilde{f}(z,w) = (z^{\delta}, \tilde{q}(z,w))$
for some $\tilde{q}$.

In a similar fashion to the skew product case,
we define the rational number $\alpha$ associated with $f$ as
\[
\alpha =
\min 
\left\{ a \geq 0 \ \Big| 
\begin{array}{lcr}
a \gamma + d \leq \delta, \
a \delta \leq a n_l + m_l \text{ and } 
a \gamma + d \leq a n_j + m_j \\ 
\text{for any $j$ such that } b_j \neq 0
\end{array} 
\right\}
\]
if $f$ is \textit{non-trivial},
and as $0$ if $f$ is \textit{trivial}.
We remark that 
the condition $a \delta \leq a n_l + m_l$ is trivial and can be removed
since $n_l \geq \delta$ and $a \geq 0$,
although the interval $\mathcal{I}_{f}$
may differ whether we add the condition.
Hence the weights of the skew product $(p,q)$ and 
the holomorphic germ $f = (\tilde{p},q)$ are the same.
Moreover,
the weights of $f$ and $\tilde{f}$ are also the same,
as stated in Lemma \ref{lemma: two weights} below. 

Let us construct a B\"{o}ttcher coordinate for $f$.
Since $\tilde{p}(z,w) \sim z^{\delta}$ on  the neighborhood $\{ |z| < r, |w| < r \}$ as $r \to 0$,
we have the following lemma.

\begin{lemma}
Let $\alpha$ be well-defined.
If $d \geq 2$ or if $d = 1$ and $\alpha < (\delta - 1)/ \gamma$, then
$f(z,w) \sim (z^{\delta}, bz^{\gamma} w^d)$ on $U_r^{}$ as $r \to 0$,
and $f(U_r) \subset U_r$ for small $r$.
\end{lemma}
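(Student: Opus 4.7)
The plan is to reduce the claim to the skew product Lemmas \ref{main lem: d > 1} and \ref{main lem: d = 1}, since the second component $q$ is unchanged and only the first component has been perturbed from $p(z)$ to $\tilde{p}(z,w)$. Because $U_r \subset \{|z| < r, |w| < r\}$, it will suffice to show that $\tilde{p}(z,w) \sim z^{\delta}$ uniformly on the bidisk as $r \to 0$, and then check that this perturbation does not spoil either of the two invariance inequalities.

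For the asymptotic I would write
\[
\frac{\tilde{p}(z,w) - z^{\delta}}{z^{\delta}} = \sum_l a_l\, z^{n_l - \delta} w^{m_l},
\]
and use that for every index either $n_l - \delta \geq 1$, or $n_l = \delta$ and $m_l \geq 1$. Every monomial on the right therefore has positive total degree in $(|z|,|w|)$, so the series converges on a small bidisk and tends to $0$ as $r \to 0$. Combined with Lemma \ref{main lem: d > 1} (respectively Lemma \ref{main lem: d = 1}) applied to $q$, this yields $f(z,w) \sim (z^{\delta}, b z^{\gamma} w^d)$ on $U_r$ as $r \to 0$.

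For the invariance, the bound $|q(z,w)| < r$ on $U_r$ is inherited from the skew product case, so only $|\tilde{p}(z,w)| < r\,|q(z,w)|^{\alpha}$ needs attention. Here I would repeat the estimate from the proofs of Lemmas \ref{weights: d > 1} and \ref{weights: d = 1}: since $\tilde{p} \sim z^{\delta}$ and $q \sim b z^{\gamma} w^d$, the substitution $|z| = |c w^{\alpha}|$ with $|c| < r$ gives
\[
\left|\frac{\tilde{p}(z,w)}{q(z,w)^{\alpha}}\right| \leq C\,|c|^{\delta - \alpha\gamma}\,|w|^{\alpha\delta - \alpha(\alpha\gamma + d)}
\]
for some constant $C$. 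The exponents are non-negative ($\delta - \alpha\gamma \geq 2$ when $d \geq 2$, and $\delta - \alpha\gamma > 1$ in the $d = 1$ case thanks to $\alpha < (\delta - 1)/\gamma$), so shrinking $r$ forces the ratio below $|c|/2 < r/2$, which gives the required invariance.

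I do not expect a genuine obstacle: the argument is a routine modification of the skew product proofs, because $\tilde{p}$ differs from $z^{\delta}$ only by terms that vanish on $U_r$ as $r \to 0$. The one thing worth double checking is that the imposed conditions on $\tilde{p}$ (namely $n_l \geq \delta$ and $m_l \geq 1$ if $n_l = \delta$) are precisely what make $\tilde{p}/z^{\delta} \to 1$ on the full bidisk rather than only on a weighted subset, so that the reduction to the skew product case is clean regardless of the value of $\alpha$.
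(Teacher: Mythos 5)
Your proposal is correct and takes essentially the same approach as the paper. The paper states this lemma with no formal proof, preceding it only with the one-line remark that $\tilde{p}(z,w) \sim z^{\delta}$ on the bidisk $\{|z| < r, |w| < r\}$ as $r \to 0$; your argument is precisely the expansion of that remark, reducing everything to Lemmas \ref{weights: d > 1} and \ref{weights: d = 1} once the conditions $n_l \geq \delta$ (and $m_l \geq 1$ if $n_l = \delta$) are used to show the perturbation of the first component vanishes uniformly on the full bidisk.
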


This lemma induces the existence of the limit of the compositions of 
$f_0^{-n}$ and $f^n$ as previous cases,
where $f_0(z,w) = (z^{\delta}, bz^{\gamma} w^d)$.

\begin{theorem}\label{thm for general case}
Let $\alpha$ be well-defined.
If $d \geq 2$ or if $d = 1$ and $\alpha < (\delta - 1)/ \gamma$, then
there is a biholomorphic map $\phi$ defined on $U_r^{}$,
with $\phi \sim id$ on $U_r^{}$ as $r \to 0$,
that conjugates $f$ to $(z,w) \to (z^{\delta}, bz^{\gamma} w^d)$. 
\end{theorem}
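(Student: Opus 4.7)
My plan is to follow the same two-step strategy used for skew products in Sections 3--5, first constructing $\phi$ as the uniform limit $\phi = \lim_{n \to \infty} f_0^{-n} \circ f^n$ on $U_r$, and then verifying injectivity separately. The preceding lemma gives $f \sim f_0$ on $U_r$ and $f(U_r) \subset U_r$, so $\phi_n = f_0^{-n} \circ f^n$ is well-defined on $U_r$ once branches of $f_0^{-n}$ are chosen consistently. Lifting by $\pi(Z,W) = (e^Z, e^W)$ produces $F = (\tilde P, Q)$ with $\tilde P(Z,W) = \delta Z + \log(1 + \tilde\zeta(e^Z, e^W))$, where $\tilde\zeta = (\tilde p - z^\delta)/z^\delta$, and $Q$ exactly as in Section 3. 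Since $\tilde p(z,w) \sim z^\delta$ on $\{|z| < r, |w| < r\}$ and $q \sim b z^\gamma w^d$ on $U_r$, we obtain $\|F - F_0\| < \varepsilon$ on $\pi^{-1}(U_r)$ for $r$ small, which is the only analytic input needed for what follows.

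With this uniform bound in hand, the telescoping estimates for $|\Phi_{n+1}^1 - \Phi_n^1|$ and $|\Phi_{n+1}^2 - \Phi_n^2|$ carry over verbatim from Section 3 when $d \geq 2$, since they depended only on the form of $F_0$ and the bound $\|F - F_0\| < \varepsilon$. For $d = 1$ I would replay the contraction argument of Lemmas \ref{lem1: d = 1} and \ref{lem2: d = 1} to extract the decay $|\tilde\zeta(F^n)|, |\eta(F^n)| = O(r/2^n)$ on $U_r$; this argument relies only on the asymptotic $f \sim f_0$ and the invariance $f(U_r) \subset U_r$, not on the skew product structure of $f$. Exponentiating and applying Riemann's removable singularity theorem across $\{z = 0\}$ exactly as in Section 3 then gives a holomorphic $\phi$ on $U_r$ with $\phi \sim id$ and $\phi \circ f = f_0 \circ \phi$.

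The main obstacle is injectivity, because the fiber-by-fiber Rouch\'e argument of Proposition \ref{biholo of lift} breaks down in the non-skew-product setting: $\tilde p$ depends genuinely on $w$, so $\Phi^1$ is no longer a function of $Z$ alone and $\Phi$ does not split into slices along fixed $Z$. To circumvent this I would invoke the classification in \cite{r}: there is a biholomorphism $\psi$ defined on a neighborhood of the origin conjugating $f$ to a genuine skew product $\tilde f(z,w) = (z^\delta, \tilde q(z,w))$. By Lemma \ref{lemma: two weights} the weight of $\tilde f$ coincides with $\alpha$, so $\tilde f$ satisfies the hypotheses of Theorem \ref{main thm: d > 1} or Theorem \ref{main thm: d = 1}, yielding a biholomorphic B\"ottcher coordinate $\tilde \phi$ on the corresponding set. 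Then $\tilde \phi \circ \psi^{-1}$ conjugates $f$ to $f_0$ and is injective on an open set containing some $U_r$. Finally, uniqueness of solutions of $\varphi \circ f = f_0 \circ \varphi$ with $\varphi \sim id$, a standard consequence of the limit construction as in the one-variable B\"ottcher theorem, identifies this map with the direct-limit $\phi$ constructed above and thereby transports injectivity to $\phi$, completing the proof.
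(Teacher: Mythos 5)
Your construction of $\phi = \lim f_0^{-n}\circ f^n$ and the verification that $\phi\sim id$ match the paper exactly, and you are right that the vertical-slice Rouché argument of Proposition~\ref{biholo of lift} no longer applies once $\tilde p$ depends on $w$. However, the paper's \emph{primary} proof of injectivity handles this not by leaving the Rouché framework but by generalizing it: Lemma~\ref{dist lem} controls the width of $L\cap(V\setminus V')$ for an \emph{arbitrary} line $L=\{W=mZ+n\}$ through $V'$ (by splitting into the cases $|m|\le 1$ and $|m|\ge 1$ and shrinking $V'$ by the factor $(1+\alpha)/\alpha$), so that Rouché can be applied to $\Phi$ restricted to the line through any two points $A_1,A_2$ with $\Phi(A_1)=\Phi(A_2)$. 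What you propose instead coincides with the paper's \emph{second} proof at the end of Section~6: conjugate $f$ to a skew product $\tilde f(z,w)=(z^\delta,\tilde q(z,w))$ via $\phi_\infty$ (Ruggiero), use Lemma~\ref{lemma: two weights} to match the weights, apply Theorem~\ref{main thm: d > 1} or \ref{main thm: d = 1} to $\tilde f$, and compose. That route is valid and arguably cleaner, since it recycles the already-proved skew-product theorems; the paper's primary route is more self-contained and gives an explicit region of injectivity.

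Two cautions. First, you have the composition backwards: since $\phi_\infty\circ f=\tilde f\circ\phi_\infty$ and $\tilde\phi\circ\tilde f=f_0\circ\tilde\phi$, the conjugacy is $\tilde\phi\circ\phi_\infty$, not $\tilde\phi\circ\phi_\infty^{-1}$. Second, the final appeal to ``uniqueness of solutions of $\varphi\circ f=f_0\circ\varphi$ with $\varphi\sim id$'' is doing real work and deserves a sentence of justification; the cleaner way to see that $\tilde\phi\circ\phi_\infty$ equals the direct-limit $\phi$ is to observe $\tilde\phi\circ\phi_\infty=\lim_n f_0^{-n}\circ\tilde f^n\circ\phi_\infty=\lim_n f_0^{-n}\circ\phi_\infty\circ f^n$, and then check that the multiplicative error introduced by $\phi_\infty$ at $f^n$ is damped out by $f_0^{-n}$ (the exponents $1/\delta^n$ and $\gamma_n/(\delta^n d^n)$ remain bounded and the factor $1+u(f^n)$ tends to $1$), so the limit agrees with $\lim_n f_0^{-n}\circ f^n$.
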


The proof of the existence of $\phi$ is similar to the skew product case.
The difficult part of the proof is the injectivity of $\phi$.
Since $\phi$ is clearly injective if  $\alpha = 0$,
we may assume that $\alpha > 0$ hereafter.
Let us state the idea of the proof of  the injectivity of $\phi$.
As in Section 4,
we prove that the lift $\Phi$ of $\phi$ is injective,
which implies the injectivity of $\phi$ because  $\Phi \sim id$.
For the skew product case,
we applied Rouch\'e's theorem to $\Phi$ restricted to a vertical line
in order to show that $\Phi_Z$ is injective,
where $\Phi = (\Phi_1, \Phi_Z)$.
Since we may assume that $\Phi_1$ is injective,
this implies that $\Phi$ is injective.
On the other hand,
in this section
we apply Rouch\'e's theorem to $\Phi$ restricted to a line,
which may not be vertical, as follows.
Let $\Phi$ be well-defined and holomorphic on $V$,
and take a sufficiently small region $V'$ in $V$.
Let $A_1$ and $A_2$ be two points in $V'$
such that $\Phi (A_1) = \Phi (A_2)$.
Applying Rouch\'e's theorem to $\Phi$ restricted to
the intersection of $V$ and the line $L$ passing through $A_1$ and $A_2$,
we can show that $A_1 = A_2$.

The point is taking a smaller region $V'$ in $V$ such that
$L \cap (V \setminus V')$ has a suitable width for any line $L$ intersecting $V'$,
as in Section 4.
Recall that 
\[
V = \left\{ \frac{\mathrm{Re} Z}{\alpha} - \frac{\log r_1}{\alpha} < \mathrm{Re} W < \log r_2 \right\},
\]
and let $\| \Phi - id \| < \varepsilon$. 
Then the following region is what we need:  
\[
V' = 
\left\{ \frac{\mathrm{Re} Z}{\alpha} - \frac{\log r_1}{\alpha} 
+ \frac{1 + \alpha}{\alpha} \cdot 2 {\varepsilon} 
< \mathrm{Re} W < \log r_2 - 2 {\varepsilon} \right\}.
\]
Let us illustrate where the constant $(1 + \alpha)/\alpha$ comes from.
First, consider everything in $\mathbb{R}^{2}$.
Let $L = \{ y = mx \}$, $\mathcal{V} = \{ y > x/\alpha \}$
and $\mathcal{V}' = \{ y > x/\alpha + R \cdot 2 {\varepsilon} \}$ for a constant $R$,
where $(x,y) \in \mathbb{R}^{2}$ and $m \in \mathbb{R}$.
If $|m| \geq 1$, 
then we take the projection $\pi_2$ to the second coordinate,
and require that 
the length of the interval $\pi_2 (L \cap (\mathcal{V} \setminus \mathcal{V}'))$ in $\mathbb{R}$
is greater than or equal to $2 {\varepsilon}$.
It is enough to consider the case $m = -1$,
since the length takes the minimum for this case.
By an elementary calculation in terms of two right-angled triangles,
it follows that, if $R = 1 + 1/\alpha$,
then the length coincides with $2 {\varepsilon}$.
If $|m| \leq 1$,
then we take the projection $\pi_1$ to the first coordinate.
By the same argument,
it follows that, if $R = 1 + 1/\alpha$,
then the length of  $\pi_1 (L \cap (\mathcal{V} \setminus \mathcal{V}'))$ 
is greater than or equal to $2 {\varepsilon}$. 
This sketch works for complex setting as well:

\begin{lemma}\label{dist lem}
Let $L$ be a line $\{ W = mZ + n \}$ which intersects $V'$.
Then
\[
\text{dist} (\pi_1^{-1} (L \cap V'), \partial \pi_1^{-1} (L \cap V)) \geq 2 {\varepsilon}
\text{ if } |m| \leq 1, \text{ and}
\]
\[
\text{dist} (\pi_2^{-1} (L \cap V'), \partial \pi_2^{-1} (L \cap V)) \geq 2 {\varepsilon}
\text{ if } |m| \geq 1,
\]
where $\pi_1$ and $\pi_2$ are the projections to $Z$ and $W$ coordinates, respectively.
\end{lemma}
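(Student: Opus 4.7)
The plan is to reduce the statement to an elementary distance computation between parallel real half-planes in $\mathbb{C}$, exactly as suggested by the $\mathbb{R}^{2}$-sketch preceding the lemma. First, I would parameterize $L$ by $Z$ for the case $|m| \leq 1$. Substituting $W = mZ + n$, the two real inequalities defining $V$ become
\[
\mathrm{Re}(mZ + n) - \mathrm{Re}\, Z/\alpha > -(\log r_{1})/\alpha, \qquad \mathrm{Re}(mZ + n) < \log r_{2},
\]
which are real-affine inequalities in $Z$ whose normals, viewed as complex numbers, are $m - 1/\alpha$ and $m$, respectively; the corresponding inequalities for $V'$ shift the right-hand sides inward by $2\varepsilon (1 + \alpha)/\alpha$ and $2\varepsilon$.

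Next, I would use the elementary fact that if a half-plane $\{\mathrm{Re}(\bar{\nu}Z) > c\}$ is replaced by $\{\mathrm{Re}(\bar{\nu}Z) > c + \delta\}$, then the Euclidean distance from the smaller region to the boundary of the larger one is $\delta/|\nu|$. Thus the two bounds yield distances $2\varepsilon(1 + \alpha)/(\alpha\,|m - 1/\alpha|)$ and $2\varepsilon/|m|$. The second is $\geq 2\varepsilon$ exactly when $|m| \leq 1$, and under that hypothesis the triangle inequality gives $|m - 1/\alpha| \leq |m| + 1/\alpha \leq (1 + \alpha)/\alpha$, so the first is also $\geq 2\varepsilon$; this yields the first assertion. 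The case $|m| \geq 1$ is entirely symmetric: parameterize $L$ via $W$ through $Z = (W - n)/m$, so the same two inequalities become real-affine conditions in $W$ with complex normals $1 - 1/(\alpha m)$ and $1$. The second contributes a distance of $2\varepsilon$, and since $|1 - 1/(\alpha m)| \leq 1 + 1/(\alpha|m|) \leq (1 + \alpha)/\alpha$ when $|m| \geq 1$, the first contributes at least $2\varepsilon$ as well.

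The main bookkeeping hurdle is verifying that the constant $(1 + \alpha)/\alpha$ in the definition of $V'$ is precisely what makes both triangle-inequality bounds succeed in the worst cases — namely $m = -1$ when $|m| \leq 1$, and $\alpha m = -1$ when $|m| \geq 1$ — which explains the somewhat unusual form of that constant. Beyond that, the argument is pure Euclidean geometry of half-planes in $\mathbb{C}$ and involves no analytic input.
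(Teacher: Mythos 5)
Your proof is correct and follows essentially the same route as the paper's: both reduce the claim to the Euclidean distance between two parallel lines in $\mathbb{C}$ defined by a real-affine constraint with complex ``normal'' involving $m$ and $1/\alpha$, compute that distance as the shift divided by the modulus of the normal, and then bound the modulus by the triangle inequality under the hypothesis on $|m|$. The only cosmetic differences are that you normalize the normal slightly differently (yours is $(1+\alpha)/\alpha$ times a factor, the paper multiplies through by $\alpha$), and you spell out both the $|m|\leq 1$ and $|m|\geq 1$ cases whereas the paper proves only $|m|\geq 1$ and cites symmetry; the substance is identical.
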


\begin{proof}
Let $n = 0$ for simplicity.
We only prove the case $|m| \geq 1$.
Note that
\[
\pi_2^{-1} (L \cap V') = H \cap
\left\{ \mathrm{Re} W < \frac{1}{\alpha} \mathrm{Re} \frac{W}{m} 
- \frac{\log r_1}{\alpha} + \frac{1 + \alpha}{\alpha} \cdot 2 \varepsilon \right\}
\]
\[
= H \cap 
\left\{ \mathrm{Re} \{ (\alpha - 1/m) W \} < 
- \log r_1 + (1 + \alpha) 2 \varepsilon \right\},
\]
where $H = \{ \mathrm{Re} W < \log r_2 - 2 \varepsilon \}$.
It is enough to show that 
$\text{dist} (l_0, l_{\varepsilon}) \geq 2 \varepsilon$,
where $l_0 : \{ \mathrm{Re} \{ (\alpha - 1/m) W \} = 0 \}$
and $l_{ \varepsilon} : \{ \mathrm{Re} \{ (\alpha - 1/m) W \} =  (1 + \alpha) 2 \varepsilon \}$.
Actually, 
\[
\text{dist} (l_0, l_{\varepsilon}) 
= \frac{(1 + \alpha) 2 \varepsilon}{|\alpha - 1/m|} \geq 2 \varepsilon
\text{ since }
\left| \alpha - \frac{1}{m} \right| \leq \alpha + \frac{1}{|m|} \leq \alpha + 1.
\]
\end{proof}

Now we are ready to prove the injectivity of $\Phi$.

\begin{proposition}
The map $\Phi$ is injective on $V'$.
\end{proposition}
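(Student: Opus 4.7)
The plan is to adapt the one-dimensional Rouché argument of Proposition \ref{biholo of lift} to arbitrary complex lines in $V'$, not only to vertical slices. Suppose, for contradiction, that $A_1 = (Z_1, W_1)$ and $A_2 = (Z_2, W_2)$ are distinct points of $V'$ with $\Phi(A_1) = \Phi(A_2)$, and let $L$ be the unique complex line through them, written as $L = \{W = mZ + n\}$ with $m = \infty$ reserved for the vertical case.

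The proof splits into two symmetric cases matching Lemma \ref{dist lem}. In the case $|m| \le 1$ we have $Z_1 \ne Z_2$, so I parametrize $L$ by its first coordinate. Let $D := \pi_1^{-1}(L \cap V)$, a strip in $\mathbb{C}$, and define on $D$
\[
g(Z) = \Phi_1(Z, mZ + n) - \Phi_1(A_1), \qquad h(Z) = Z - Z_1.
\]
Then $g(Z_1) = 0$, and the identity $\Phi_1(A_1) = \Phi_1(A_2)$ gives $g(Z_2) = 0$ as well. From $\|\Phi - \mathrm{id}\| < \varepsilon$,
\[
|g(Z) - h(Z)|
= \bigl|(\Phi_1 - Z)(Z, mZ+n) - (\Phi_1 - Z)(A_1)\bigr|
< 2\varepsilon
\]
for every $Z \in D$.

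By Lemma \ref{dist lem} the distance between $\pi_1^{-1}(L \cap V')$ and $\partial D$ is at least $2\varepsilon$, so I can choose a large simple closed rectangular curve $\Gamma \subset D$ enclosing both $Z_1$ and $Z_2$, with its long sides placed strictly inside the $2\varepsilon$-buffer around $\partial D$ and its short sides pushed far enough from $Z_1, Z_2$ that $|Z - Z_1| \ge 2\varepsilon$ on all of $\Gamma$. Then $|g - h| < 2\varepsilon \le |h|$ on $\Gamma$, Rouché's theorem applies, and $g$ has exactly as many zeros inside $\Gamma$ as $h$, namely one; since both $Z_1, Z_2$ are interior zeros of $g$, this forces $Z_1 = Z_2$ and hence $A_1 = A_2$, a contradiction. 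The case $|m| \ge 1$, which subsumes vertical lines ($m = \infty$), is handled by the mirror argument: parametrize $L$ by $W$, work on $\pi_2^{-1}(L \cap V)$ with $h(W) = W - W_1$ and $g(W)$ built from $\Phi_2$ along $L$, and invoke the second inequality of Lemma \ref{dist lem}.

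The main obstacle is arranging $\Gamma$ so that the Rouché inequality is strict uniformly along it, but the essential geometric content has already been isolated in Lemma \ref{dist lem}, whose sole purpose is to supply a $2\varepsilon$-buffer valid for every slope $m$. Everything else is a direct transcription of the skew product Rouché argument from Section 4.
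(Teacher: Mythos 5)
Your proof is correct and follows essentially the same route as the paper: split by slope $|m| \lessgtr 1$, restrict $\Phi$ to the complex line $L$ parametrized by $Z$ or $W$ via $\pi_1$ or $\pi_2$, invoke Lemma \ref{dist lem} for the buffer, and conclude by Rouch\'e exactly as in Proposition \ref{biholo of lift}. The only cosmetic difference is your comparison function $h(Z) = Z - Z_1$, which yields $|g-h| < 2\varepsilon$, versus the paper's $h(Z) = Z - \Phi_1(A_1)$, which yields $|g-h| < \varepsilon$; both fit within the $2\varepsilon$ margin built into $V'$.
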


\begin{proof}
Let $\Phi (A_1) = \Phi (A_2)$ 
for points $A_1$ and $A_2$ in $V'$.
Let $L$ be the line passing through $w_1$ and $w_2$.
It is enough to consider the case $L = \{ W = mZ + n \}$.
Define $\tilde{\Phi}_1 = \pi_1 \circ \Phi \circ u$
and $\tilde{\Phi}_2 = \pi_2 \circ \Phi \circ v$,
where $u(Z) = (Z, mZ + n)$
and $v(W) = (W/m, W + n)$:
\begin{equation*}
\begin{CD}
\tilde{\Phi}_1 (\text{or} \ \tilde{\Phi}_2) : \text{preimage in } 
\mathbb{C}^{} @> u  (\text{or} \ v) >> L \cap V @> \Phi >> 
\mathbb{C}^{2} @> \pi_1  (\text{or} \ \pi_2) >> \mathbb{C}^{}.
\end{CD}
\end{equation*}
It then follows from Lemma \ref{dist lem} 
that $A_1 = A_2$,
by applying Rouch\'e's theorem to $\tilde{\Phi}_1$ or $\tilde{\Phi}_2$
if $|m| \leq 1$ or $|m| \geq 1$
as in Proposition \ref{biholo of lift}. 
\end{proof}

Finally,
we give another proof of Theorem \ref{thm for general case}.
The germ $f$ can be written as $(z^{\delta} (1 + \varepsilon (z,w)), q(z,w))$,
where $\varepsilon$ converges to $0$ as $z$ and $w$ tend to $0$.
Moreover, Theorem 1.3 in \cite{r} induces the following.

\begin{proposition}
The germ $f$ is conjugate to a holomorphic germ 
of the form $\tilde{f}(z,w) = (z^{\delta}, \tilde{q}(z,w))$ for some $\tilde{q}$.
\end{proposition}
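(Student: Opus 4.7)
My plan is to reduce the claim to a functional equation for a single scalar function, solvable by contraction. First I would write $\tilde{p}(z,w) = z^{\delta}\bigl(1 + \varepsilon(z,w)\bigr)$ with $\varepsilon(0,0)=0$, which is legitimate because $n_l \geq \delta$ and $m_l \geq 1$ whenever $n_l = \delta$. Next I would seek a conjugating biholomorphism of the special form $h(z,w) = (z\, u(z,w), w)$ with $u(0,0) = 1$, so that $h$ acts only on the first coordinate and leaves the $w$-axis pointwise fixed. A direct computation shows that $h^{-1}\circ f \circ h$ has second component $\tilde{q}(z,w) = q(z u(z,w), w)$ and that its first component equals $z^{\delta}$ precisely when
\[
u\bigl(z^{\delta}, q(z u(z,w), w)\bigr) = u(z,w)^{\delta}\,\bigl(1 + \varepsilon(z u(z,w), w)\bigr).
\]

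Passing to $\Psi = \log u$, the equation rewrites as
\[
\Psi(z,w) = \tfrac{1}{\delta}\,\Psi\bigl(z^{\delta}, q(z e^{\Psi(z,w)}, w)\bigr) - \tfrac{1}{\delta}\log\bigl(1+\varepsilon(z e^{\Psi(z,w)}, w)\bigr).
\]
On a small bidisc $\{|z|<r,\ |w|<r\}$ the right-hand side defines a self-map of the Banach space of bounded holomorphic functions vanishing at the origin. The prefactor $1/\delta \leq 1/2$, combined with the vanishing of $\varepsilon$ at $0$ and the fact that iterating the inner argument $(z,w)\mapsto(z^{\delta}, q(ze^{\Psi}, w))$ strictly shrinks domains (since $f$ is superattracting and $d\geq 1$), makes this operator a strict contraction in the sup-norm. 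The Banach fixed-point theorem then supplies a holomorphic $\Psi$, hence $u$, and hence the conjugacy $h$.

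The main obstacle is the implicit coupling: $q(z u(z,w), w)$ depends on the unknown $u$, so the equation is genuinely nonlinear and one must be careful that small perturbations of $\Psi$ produce correspondingly small perturbations of the composed argument. The standard remedy is to fix $r$ small enough that both $\varepsilon$ and the variation of $q$ are dominated by a constant $\ll 1$, and to run the iteration on a closed ball in which $\|e^{\Psi}-1\|$ stays small. Alternatively—and this is the route the paper actually takes—one invokes Theorem~1.3 of \cite{r}: a Jacobian computation using $n_l\geq\delta$, $n_j\geq\gamma$, and $m_j>d$ when $n_j=\gamma$ shows that the critical set of $f$ near the origin lies on the forward-invariant normal-crossings divisor $\{zw=0\}$, so $f$ is a contracting rigid germ, and Ruggiero's rigidification theorem produces the desired normal form with first component $z^{\delta}$ directly.
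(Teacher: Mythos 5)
Your functional equation is correctly derived, and your strategy of recasting the problem as a scalar fixed-point equation for $\Psi=\log u$ and solving by contraction is a genuinely different route from the paper's. The paper instead writes down the explicit sequence
\[
\phi_n(z,w)=\Bigl(z\prod_{j=1}^{n}\bigl(1+\varepsilon(f^{j-1}(z,w))\bigr)^{1/\delta^{j}},\,w\Bigr),
\]
checks the telescoping identity $\phi_n\circ f=\tilde f_n\circ\phi_{n+1}$ with $\tilde f_n(z,w)=(z^\delta,q(\phi_{n+1}^{-1}(z,w)))$, and passes to the limit; convergence is immediate since the exponents $1/\delta^{j}$ decay geometrically while $\varepsilon(f^{j-1})\to 0$. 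Your implicit formulation produces essentially the same conjugacy but trades the explicit telescope for a heavier analytic step that you name but do not carry out: the prefactor $1/\delta\le 1/2$ and the domain-shrinking of the iterates are not, on their own, enough for a strict contraction, because the operator also moves the evaluation point of $\Psi$. You still owe a uniform Lipschitz estimate on $\Psi$ at the shifted arguments $(z^\delta,q(ze^\Psi,w))$ — obtainable by Cauchy estimates, using $\delta\ge 2$ and $\gamma+d\ge 2$ to push those points well into the interior of the bidisc — together with the observation that the radius $M$ of the invariant ball $\{\|\Psi\|\le M\}$ can be driven to $0$ along with $r$, so that the resulting $O(M)$ correction to $1/\delta$ stays below $1-1/\delta$. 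That bookkeeping is the only real work in your route and should be made explicit. Finally, your closing aside slightly mischaracterizes the paper: it re-presents Ruggiero's proof via the $\phi_n$ above rather than invoking rigidity as a black box, and the claim that the critical locus of $f$ lies on $\{zw=0\}$ is neither established nor needed here — indeed the paper only asserts rigidity of $f$ in the cases where $f$ is trivial or $d=1$.
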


\begin{proof}
We briefly review the proof in \cite{r} following a slightly different presentation.  
Define
\[
\phi_n (z,w) = \left( z \cdot \prod_{j=1}^n \sqrt[{\delta}^j]{1 + \varepsilon (f^{j-1}(z, w))}, w \right).
\]
Then $\phi_n$ is well-defined on a small neighborhood of the origin, and
\[
\phi_n \circ f = \tilde{f}_n \circ \phi_{n+1}
\] 
holds, where $\tilde{f}_n (z,w) = (z^{\delta}, q(\phi_{n+1}^{-1} (z,w)))$.
Since $\phi_n$ converges uniformly to $\phi_{\infty}$, 
it follows that 
$\phi_{\infty} \circ f = \tilde{f} \circ \phi_{\infty}$,
where $\tilde{f} (z,w) = (z^{\delta}, q(\phi_{\infty}^{-1} (z,w)))$.
\end{proof}

Since $\tilde{f}$ is skew product,
we can construct the B\"{o}ttcher coordinate $\tilde{\phi}$ 
defined on $U_r^{\tilde{\alpha}}$
that conjugates $\tilde{f}$ to $f_0$ as previous sections,
where $\tilde{\alpha}$ denotes the weight of $\tilde{f}$
and $f_0(z,w) = (z^{\delta}, bz^{\gamma} w^d)$.
Moreover,
the region $U_r^{\tilde{\alpha}}$ coincides with $U_r^{\alpha}$:

\begin{lemma}\label{lemma: two weights}
The weights $\alpha$ and $\tilde{\alpha}$ of $f$ and $\tilde{f}$ are the same.
\end{lemma}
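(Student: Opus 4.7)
The plan is to compare the power series expansions of $q$ and $\tilde{q}$ directly, and to show that the admissibility conditions defining $\alpha$ and $\tilde{\alpha}$ cut out the same set of non-negative reals. From the construction of $\phi_{\infty}$ in the preceding proposition, $\phi_{\infty}(z,w) = (z\, g(z,w), w)$ for a holomorphic $g$ with $g(0,0) = 1$, so its inverse has the form $\phi_{\infty}^{-1}(z,w) = (z\, h(z,w), w)$ for a holomorphic $h$ with $h(0,0) = 1$. Reading off the second component of $\tilde{f} = \phi_{\infty} \circ f \circ \phi_{\infty}^{-1}$,
\[
\tilde{q}(z,w) = q(z\, h(z,w), w) = b z^{\gamma} w^d h(z,w)^{\gamma} + \sum_{j} b_j z^{n_j} w^{m_j} h(z,w)^{n_j}.
\]

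Next I would verify that the minimal exponent pair of $\tilde{q}$ in the lexicographic order is still $(\gamma, d)$, so that $(\tilde{\gamma}, \tilde{d}) = (\gamma, d)$ and the coefficient of the leading term is still $b$. Since $h^{\gamma}$ and each $h^{n_j}$ is a power series with constant term $1$, every non-leading monomial $z^{\tilde{n}_k} w^{\tilde{m}_k}$ of $\tilde{q}$ is either of the form $z^{\gamma + s} w^{d + t}$ with $(s,t) \neq (0,0)$ and $s,t \geq 0$, or of the form $z^{n_j + s} w^{m_j + t}$ for some $j$ with $b_j \neq 0$ and $s,t \geq 0$. In both cases the pair $(\tilde{n}_k, \tilde{m}_k)$ dominates componentwise either $(\gamma, d)$ (with at least one strict inequality, which also confirms the lex order requirement $\tilde{m}_k > d$ whenever $\tilde{n}_k = \gamma$) or some $(n_j, m_j)$ appearing in $q$.

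From this componentwise domination I would deduce that any $a \geq 0$ admissible for $f$ remains admissible for $\tilde{f}$: the condition $a\gamma + d \leq \delta$ is unchanged, and for each non-leading monomial of $\tilde{q}$ the inequality $a \tilde{n}_k + \tilde{m}_k \geq a\gamma + d$ follows from $a s + t \geq 0$ combined with either $a n_j + m_j \geq a \gamma + d$ or the trivial identity. This yields $\tilde{\alpha} \leq \alpha$. For the reverse inequality I would run the symmetric argument on $f = \phi_{\infty}^{-1} \circ \tilde{f} \circ \phi_{\infty}$, writing $q(z,w) = \tilde{q}(z\, g(z,w), w)$ and repeating the domination argument with the roles of $q$ and $\tilde{q}$ swapped; this gives $\alpha \leq \tilde{\alpha}$.

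The main obstacle I expect is the bookkeeping around the leading term: one must carefully check that the substitution by $h$ never produces an exponent strictly smaller than $(\gamma, d)$ in the lex order, and that every exponent pair newly created in $\tilde{q}$ is bounded below componentwise by an exponent already present in $q$ (or by $(\gamma,d)$ with strict inequality in at least one coordinate). This relies essentially on the fact that $\phi_{\infty}$ is tangent to the identity and modifies only the $z$-coordinate by a holomorphic unit, so that no cancellation among leading terms can occur and the expansion $q(zh,w)$ is governed by componentwise shifts of the exponent pairs of $q$.
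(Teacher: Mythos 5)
Your proof is correct and follows essentially the same route as the paper: both rest on the key observation that substituting $z \mapsto z(1+v(z,w))$ (equivalently multiplying by a unit $h$ with $h(0,0)=1$) only shifts the exponent pairs of $q$ upward componentwise, so the leading term $bz^\gamma w^d$ is preserved and every new monomial dominates an old one. The paper phrases the conclusion via the explicit formula $\alpha = m_f = \sup\{(d-m_j)/(n_j-\gamma)\}$ and notes the new ratios $(d-m_{ij})/(n_{ij}-\gamma)$ are no larger; you instead argue at the level of the admissible sets defining $\alpha$ and $\tilde\alpha$, getting $\tilde\alpha\le\alpha$ from the inclusion and $\alpha\le\tilde\alpha$ from the symmetric conjugacy $f=\phi_\infty^{-1}\circ\tilde f\circ\phi_\infty$. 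Your two-sided version is slightly more robust, since it does not need to rule out coefficient cancellation in the collected expansion of $\tilde q$ in order to conclude $\tilde\alpha\ge\alpha$ — the reverse conjugacy handles that automatically — but the underlying idea is the same.
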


\begin{proof}
We may write $\phi_{\infty} (z,w) = (z(1+u(z,w)),w)$
for a holomorphic germ $u$,
and so $\tilde{q} (z,w) = q(z(1+v(z,w)),w)$ 
for a holomorphic germ $v$
since  $\tilde{f} = \phi_{\infty} \circ f \circ \phi_{\infty}^{-1}$.
Let $b_j z^{n_j} w^{m_j}$ be a term in $q$.
Since $v$ is holomorphic in a neighborhood of the origin, 
the power series expansion of
the corresponding term $b_j \{ z(1+v(z,w)) \}^{n_j} w^{m_j}$ in $\tilde{q}$
can be expressed as
\[
b_j z^{n_j} w^{m_j} + \sum_{} b_{ij} z^{n_{ij}} w^{m_{ij}},
\]
where $n_{ij} \geq n_j$ and $m_{ij} \geq m_j$ for any $i$.
In particular,
$\tilde{q}$ has the same major term $b z^{\gamma} w^d$ as $q$.
If $f$ is \textit{trivial},
then $\tilde{f}$ is also \textit{trivial} and $\alpha = \tilde{\alpha} = 0$.
Let $f$ be \textit{non-trivial}.
Then $m_j < d$ and $n_j > \gamma$ for some $j$,
\[
\alpha =
\sup \left\{ \frac{d - m_j}{n_j - \gamma} \ \Big|  
\begin{array}{lr}
b_j\neq 0 \text{  and } n_j > \gamma
\end{array} 
\right\} > 0
\text{ and}
\]
\[
\tilde{\alpha} = 
\sup_{} \left\{ \frac{d - m_j}{n_j - \gamma}, \frac{d - m_{ij}}{n_{ij} - \gamma} \ \Big|  
\begin{array}{lr}
b_j\neq 0, n_j > \gamma \text{ and } b_{ij} \neq 0
\end{array} 
\right\} > 0.
\]
Since $d - m_{ij} \leq d - m_j$ and $n_j - \gamma \leq n_{ij} - \gamma$,
it follows that
\[
\frac{d - m_{ij}}{n_{ij} - \gamma} \leq \frac{d - m_j}{n_j - \gamma}
\]
if $n_j > \gamma$.
Therefore,
$\alpha = \tilde{\alpha}$.
\end{proof}

Consequently, the composition $\tilde{\phi} \circ \phi_{\infty}$
coincides with the B\"{o}ttcher coordinate $\phi$
in Theorem \ref{thm for general case},
that is defined on $U_r^{\alpha}$ and conjugates $f$ to $f_0$.

\bibliographystyle{amsplain}

\begin{thebibliography}{9}
\bibitem{a}
  \textsc{M. Abate}, 
  \textit{Open problems in local discrete holomorphic dynamics},
  Anal. Math. Phys. \textbf{1} (2011), 261-287.
\bibitem{b}
  \textsc{L. E. B\"{o}ttcher},
  \textit{The principal laws of convergence of iterates and their application to analysis} (Russian),
  Izv. Kazan. Fiz.-Mat. Obshch. \textbf{14} (1904), 137-152.
\bibitem{bek}
  \textsc{X. Buff, A. L. Epstein and S. Koch}, 
  \textit{B\"{o}ttcher coordinates},
  Indiana Univ. Math. J. \textbf{61} (2012), 1765-1799.
\bibitem{f}
  \textsc{C. Favre}, 
  \textit{Classification of $2$-dimensional contracting rigid germs and Kato surfaces: I},
  J. Math. Pures Appl. \textbf{79} (2000), 475-514.
\bibitem{fj}
  \textsc{C. Favre and M. Jonsson}, 
  \textit{Eigenvaluations},
  Ann. Sci. \'Ecole Norm. Sup. \textbf{40} (2007), 309-349.
\bibitem{hp}
  \textsc{J. H. Hubbard and P. Papadopol}, 
  \textit{Superattractive fixed points in $\mathbf{C}^{n}$},
  Indiana Univ. Math. J. \textbf{43} (1994), 321-365.
\bibitem{j}
  \textsc{M. Jonsson}, 
  \textit{Dynamics on Berkovich spaces in low dimensions},
  Berkovich spaces and applications, 
  Lecture Notes in Mathematics, vol. 2119, Springer, 2015, pp. 205-366.
\bibitem{r-rigid}
  \textsc{M. Ruggiero}, 
  \textit{Rigidification of holomorphic germs with noninvertible differential},
  Michigan Math. J. \textbf{61} (2012), 161-185. 
\bibitem{r}
  \textsc{M. Ruggiero}, 
  \textit{Contracting rigid germs in higher dimensions},
  Annales de l'Institut Fourier \textbf{63} (2013), 1913-1950.
\bibitem{ueda}
  \textsc{T. Ueda}, 
  \textit{Complex dynamical systems on projective spaces},
  Adv. Ser. Dynam. Systems, vol. 13,
  World Scientific, Singapore, 1993, pp. 120-138.
\bibitem{u-fiberwise}
  \textsc{K. Ueno}, 
  \textit{Fiberwise Green functions of skew products 
  semiconjugate to some polynomial products on $\mathbf{C}^{2}$},
  Kodai Math. J. \textbf{35} (2012), 345-357.
\bibitem{u}
  \textsc{K. Ueno}, 
  \textit{B\"{o}ttcher coordinates for polynomial skew products},
  to appear in Ergodic Theory Dynam. Systems.
  available online.
\bibitem{ushiki}
  \textsc{S. Ushiki}, 
  \textit{B\"{o}ttcher's theorem and super-stable manifolds 
  for multidimensional complex dynamical systems},  
  Adv. Ser. Dynam. Systems, vol. 11, 
  World Scientific, Singapore, 1992, pp. 168-184.
\end{thebibliography}


\end{document}